\documentclass[a4paper,11pt]{article}
\usepackage{graphicx}
\usepackage{amssymb}
\usepackage[french, english]{babel}
\usepackage[utf8]{inputenc}
\usepackage{amsmath,amsfonts,amssymb,amsthm}
\usepackage[T1]{fontenc}
\usepackage{fullpage}
\usepackage{hyperref}
\usepackage{mathrsfs}
\usepackage{relsize}
\usepackage{tikz}
\usepackage{bbm}
\usepackage{appendix}

\selectlanguage{english}

\newcommand{\eps}{\varepsilon}

\newcommand{\E}{\mathbb E}

\renewcommand{\P}{\mathbb P}

\linespread{1.1}

\theoremstyle{definition}

\newtheorem{thm}{Theorem}

\newtheorem{defn}{Definition}
\newtheorem{rem}[defn]{Remark}

\newtheorem{prop}[defn]{Proposition}
\newtheorem{corr}[defn]{Corollary}

\newtheorem{lem}[defn]{Lemma}
\newtheorem{conj}[defn]{Conjecture}

\tikzstyle{every node}=[circle, draw, fill=black!50, inner sep=0pt, minimum width=4pt]
\tikzstyle{rouge}=[circle, draw, fill=red, inner sep=0pt, minimum width=6pt]
\tikzstyle{bleu}=[circle, draw, fill=blue, inner sep=0pt, minimum width=6pt]
\tikzstyle{petitrouge}=[circle, draw, fill=red, inner sep=0pt, minimum width=4pt]
\tikzstyle{petitbleu}=[circle, draw, fill=blue, inner sep=0pt, minimum width=4pt]
\tikzstyle{texte}=[draw=none, fill=none]

\title{\bf{On the mixing time of the flip walk on triangulations of the sphere}}
\author{Thomas \bsc{Budzinski} \footnote{ENS Paris and Université Paris-Saclay, \url{thomas.budzinski@ens.fr}}}
\date{}

\begin{document}

\maketitle

\begin{abstract}
A simple way to sample a uniform triangulation of the sphere with a fixed number $n$ of vertices is a Monte-Carlo method: we start from an arbitrary triangulation and flip repeatedly a uniformly chosen edge. We give a lower bound of order $n^{5/4}$ on the mixing time of this Markov chain.
\end{abstract}

\section{Introduction}

Much attention has been given recently to the study of large uniform triangulations of the sphere. Historically, these triangulations have been first considered by physicists as a discrete model for quantum gravity. Before the introduction of more direct tools (bijection with trees or peeling process), the first simulations \cite{JKP86, KKM85} were made using a Monte-Carlo method based on flips of triangulations.

More precisely, for all $n \geq 3$, let $\mathscr{T}_n$ be the set of rooted type-I triangulations of the sphere with $n$ vertices (that is, triangulations that may contain loops and multiple edges, equipped with a distinguished oriented edge). If $t$ is a triangulation we write $V(t)$ for the set of its vertices and $E(t)$ for the set of its edges. If $t \in \mathscr{T}_n$ and $e\in E(t)$, we write $\mathfrak{flip}(t,e)$ for the triangulation obtained by removing the edge $e$ from $t$ and drawing the other diagonal of the face of degree $4$ that appears. We say that $\mathfrak{flip}(t,e)$ is obtained from $t$ by \textit{flipping} the edge $e$ (cf. Figure \ref{figureflip}). Note that it is possible to flip a loop and to flip the root edge. The only case in which an edge cannot be flipped is if both of its sides are adjacent to the same face like the edge $e_2$ on Figure \ref{figureflip}. In this case $\mathfrak{flip}(t,e)=t$. Note that there is a natural bijection between $E(t)$ and $E \left( \mathfrak{flip}(t,e) \right)$. When there is no ambiguity, we shall sometimes treat an element of one of these two sets as if it belonged to the other.

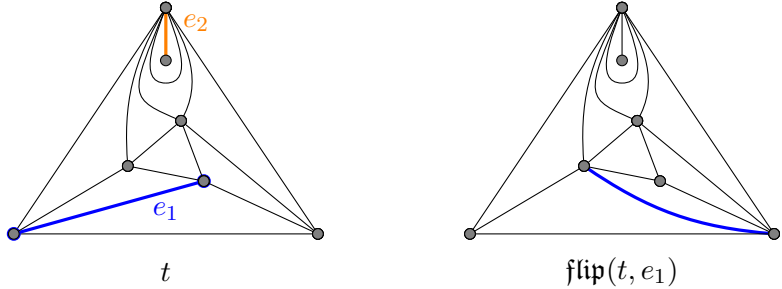
\begin{figure}
\begin{center}
\begin{tikzpicture}
\draw[very thick, orange] (2,3)--(2,2.3);
\draw (2.4,2.8)[orange] node[texte] {$e_2$};
\draw (2,2.3) node{};
\draw (0,0) node{}--(4,0) node{};
\draw (0,0) node{}--(2,3) node{};
\draw (2,3) node{}--(4,0) node{};
\draw (2,3) node{} to[in=180, out=250] (2,2);
\draw (2,3) node{} to[in=0, out=290] (2,2);
\draw (2,3) node{} .. controls (1.5,1.8) .. (2.2,1.5) node{};
\draw (2,3) node{} to[in=60, out=300] (2.2,1.5)  node{};
\draw[very thick, blue] (0,0) node{}--(2.5,0.7) node{};
\draw (2,0.3)[blue] node[texte] {$e_1$};
\draw (4,0) node{}--(2.5,0.7) node{};
\draw (4,0) node{}--(2.2,1.5) node{};
\draw (1.5,0.9) node{}--(2.5,0.7) node{};
\draw (2.5,0.7) node{}--(2.2,1.5) node{};
\draw (1.5,0.9) node{}--(0,0) node{};
\draw (1.5,0.9) node{} to[bend left=18] (2,3) node{};
\draw (1.5,0.9) node{}--(2.2,1.5) node{};
\draw (2,-0.5) node[texte]{$t$};

\begin{scope}[shift={(6,0)}]
\draw (2,3)--(2,2.3);
\draw[very thick, blue] (1.5,0.9) to[bend right=15] (4,0);
\draw (2,2.3) node{};
\draw (0,0) node{}--(4,0) node{};
\draw (0,0) node{}--(2,3) node{};
\draw (2,3) node{}--(4,0) node{};
\draw (2,3) node{} to[in=180, out=250] (2,2);
\draw (2,3) node{} to[in=0, out=290] (2,2);
\draw (2,3) node{} .. controls (1.5,1.8) .. (2.2,1.5) node{};
\draw (2,3) node{} to[in=60, out=300] (2.2,1.5)  node{};
\draw (4,0) node{}--(2.5,0.7) node{};
\draw (4,0) node{}--(2.2,1.5) node{};
\draw (1.5,0.9) node{}--(2.5,0.7) node{};
\draw (2.5,0.7) node{}--(2.2,1.5) node{};
\draw (1.5,0.9) node{}--(0,0) node{};
\draw (1.5,0.9) node{} to[bend left=18] (2,3) node{};
\draw (1.5,0.9) node{}--(2.2,1.5) node{};
\draw (2,-0.5) node[texte]{$\mathfrak{flip}(t,e_1)$};
\end{scope}
\end{tikzpicture}
\end{center}
\vspace{-1cm}
\caption{An example of flip of an edge. The orange edge $e_2$ is not flippable.} \label{figureflip}
\end{figure}

The graph of triangulations of the sphere in which two triangulations are related if one can pass from one to the other by flipping an edge has already been studied in the type-III setting (that is, triangulations with neither loops nor multiple edges): it is connected \cite{W36} and its diameter is linear in $n$ \cite{K97}. We extend these results to our setup in Lemma \ref{irreducibility}. 

We define a Markov chain $(T_n(k))_{k \geq 0}$ on $\mathscr{T}_n$ as follows: conditionally on $(T_n(0), \dots, T_n(k))$, let $e_k$ be a uniformly chosen edge of $T_n(k)$. We take $T_{n}(k+1)=\mathfrak{flip}(T_n(k),e_k)$. It is easy to see that the uniform measure on $\mathscr{T}_n$ is reversible, thus stationary for $\left( T_n(k) \right)_{k \geq 0}$, so this Markov chain will converge to the uniform distribution (the irreducibility is guaranteed by the connectedness results described above and the aperiodicity by the possible existence of non flippable edges). It is then natural to estimate the mixing time of $\left( T_n(k) \right)_{k \geq 0}$ (see Chapter 4.5 of \cite{LPW09} for a proper definition of the mixing time). Our theorem provides a lower bound.

\begin{thm} \label{mainthm}
There is a constant $c>0$ such that for all $n \geq 3$ the mixing time of the Markov chain $(T_n(k))_{k \geq 0}$ is at least $c n^{5/4}$.
\end{thm}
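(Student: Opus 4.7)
The plan is to exploit the mismatch between distances in a carefully chosen initial triangulation and distances in a typical uniform triangulation. Under the stationary measure $\pi$ on $\mathscr{T}_n$, a uniform triangulation of the sphere is, after rescaling by $n^{-1/4}$, close to the Brownian map, so typical graph distances are of order $n^{1/4}$. I would therefore aim to exhibit an observable $F : \mathscr{T}_n \to \R$ whose stationary value is typically of order $n^{1/4}$, but which evolves slowly under the flip dynamics in the sense that each step moves $F$ by $\pm 1$ with probability at most $O(F/n)$.

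My observable of choice is the graph distance $F(t) = d_t(u, v)$ between two tagged vertices $u, v$ of the triangulation, for instance the two endpoints of the root edge together with a convention for tracking $u, v$ when the root edge itself is flipped. I would start the chain from a triangulation $t_0 \in \mathscr{T}_n$ with $d_{t_0}(u, v) = 1$. The argument then rests on two structural properties of $F$. First, a \emph{Lipschitz property}: for every $t$ and every $e \in E(t)$ one has $|d_{\mathfrak{flip}(t,e)}(u,v) - d_t(u,v)| \leq 1$, because any $uv$-path crossing $e = \{a,b\}$ can be re-routed through the opposite diagonal $\{c,d\}$ of the quadrilateral $acbd$ at a cost of at most one extra step. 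Second, a \emph{slow-change property}: the number of edges $e$ whose flip strictly changes $d_t(u, v)$ is at most $C\,d_t(u, v)$. The increase case is straightforward, since such an $e$ must lie on every $uv$-geodesic of $t$ and there are at most $d_t(u,v)$ such edges. The decrease case is the combinatorial heart of the argument: the new edge $\{c,d\}$ must produce a strict shortcut, which forces the pair $(c,d)$ to sit inside a narrow tube around the geodesic system between $u$ and $v$.

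With these in hand, set $F_k = F(T_n(k))$ and $\tau_L = \inf\{k : F_k \geq L\}$ with $L = \alpha n^{1/4}$ and $\alpha$ a small constant. Combining the two properties yields $\E\bigl[(F_{k+1}-F_k)^2 \,\big|\, T_n(k)\bigr] \leq CL/n$ on $\{k < \tau_L\}$, and assuming the drift can be controlled, a Doob-style second moment argument then bounds $\E[F_{k \wedge \tau_L}^2]$ by a constant times $Lk/n$. Markov's inequality gives $\P(\tau_L \leq k) \leq C' k/(Ln)$, which for $k = c n^{5/4}$ equals $O(c)$ and can be made smaller than $1/8$. On the other hand, under $\pi$ the distance $d(u,v)$ is of order $n^{1/4}$ with high probability---either via the Brownian map convergence of Le Gall and Miermont, or via more elementary Boltzmann-type estimates---which separates the law of $T_n(cn^{5/4})$ from $\pi$ in total variation and yields Theorem \ref{mainthm}.

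The central obstacle is the slow-change property, and more precisely the bound on the number of edges whose flip creates a shortcut between $u$ and $v$. Establishing it requires a clean combinatorial description of distance-decreasing flips together with a volume estimate for the near-geodesic tube. A secondary but real difficulty is that $F_k$ is not obviously a martingale: an asymmetry between distance-increasing and distance-decreasing flips could a priori contribute a drift of order $F_k/n$ per step, which accumulated over $n^{5/4}$ steps is itself already of order $n^{1/4}$. To handle this one may pass to a carefully chosen concave potential of $F_k$, or exploit a near-symmetry between shortcut-creating and geodesic-breaking flips so as to recover a genuine supermartingale. The handling of the root-edge flip and the stationary concentration $d(u,v) \asymp n^{1/4}$ are more routine but still need to be carried out.
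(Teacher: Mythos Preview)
Your approach is different from the paper's, and the pivotal step fails as stated. The slow-change property you propose---that the number of edges whose flip changes $d_t(u,v)$ is at most $C\,d_t(u,v)$---is false deterministically. Take the bipyramid on $n$ vertices with poles $u,v$ and equator $w_1,\dots,w_{n-2}$: each $w_i$ is joined to $u$, $v$, $w_{i\pm 1}$, and the faces are $uw_iw_{i+1}$, $vw_iw_{i+1}$. Here $d_t(u,v)=2$, yet flipping any of the $n-2$ equatorial edges $w_iw_{i+1}$ creates the edge $uv$ and drops the distance to $1$. So the number of distance-changing flips is $\Theta(n)$ while $d_t(u,v)=2$, and your variance bound $\E[(F_{k+1}-F_k)^2\mid T_n(k)]\le CL/n$ breaks down on such configurations. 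One might hope these do not appear along the trajectory, but ruling them out requires controlling the geometry of $T_n(k)$ for all $k\le cn^{5/4}$, which is precisely the object under study. The paper in fact records exactly your heuristic as a closing remark and flags it as ``hard to be made rigorous''; your drift concern is part of this, but the more basic obstruction is the counterexample above.

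The paper sidesteps pointwise distances entirely and uses a global observable: the existence of a short balanced separating cycle. Start from two independent uniform triangulations of the $1$-gon with $\sim n/2$ inner vertices glued along their boundaries, and track that boundary under the dynamics via a peeling exploration. A flip touches the boundary with probability $\widetilde P_n(k)/|E(T_n(k))|$, so after $o(n^{5/4})$ flips only $o(n^{3/4})$ peeling steps have occurred; the Curien--Le Gall estimates then give that the unknown half still has perimeter $o(\sqrt n)$ and has lost only $o(n)$ vertices. Conditionally it is a uniform triangulation with these parameters, and Krikun's small-cycle theorem (transferred via a coupling with the UIPT) produces inside it a cycle of length $o(n^{1/4})$ separating $T_n(k_n)$ into two pieces each of volume $\ge n/4$. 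A uniform triangulation has no such cycle with probability tending to $1$ by Le~Gall--Paulin (sphericity of the Brownian map), which yields the total-variation separation. The key advantage is that peeling plus Lemma~\ref{resteruniforme} converts the dynamical problem into a static one about uniform boundary triangulations, where sharp enumeration and coupling tools are available.
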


Mixing times for other types of flip chains have also been investigated. For triangulations of a convex $n$-gon without inner vertices it is known that the mixing time is polynomial and at least of order $n^{3/2}$ (see \cite{MT97, MRS98}). In particular, our proof was partly inspired by the proof of the lower bound in \cite{MRS98}. Finally, see \cite{CMSS15} for estimates on the mixing time of the flip walk on \textit{lattice triangulations}, that is, triangulations whose vertices are points on a lattice and with Boltzmann weights depending on the total length of their edges.

The strategy of our proof is as follows: we start with two independent uniform triangulations with a boundary of length $1$ and $\frac{n}{2}$ inner vertices and glue them together along their boundaries. We obtain a triangulation of the sphere with a cycle of length $1$ such that half of the vertices lie on each side of this cycle. We then start our Markov chain from this triangulation and discover one of the two sides of the cycle gradually by a peeling procedure. By using the estimates of Curien and Le Gall \cite{CLGpeeling} and a result of Krikun about separating cycles in the UIPT \cite{Kri04}, we show that after $o(n^{5/4})$ flips, with high probability, the triangulation still has a cycle of length $o(n^{1/4})$, on each side of which lie a proportion at least $\frac{1}{4}$ of the vertices. But by a result of Le Gall and Paulin \cite{LGP08}, this is not the case in a uniform triangulation (this is the discrete counterpart of the homeomorphicity of the Brownian map to the sphere), which shows that a time $o(n^{5/4})$ is not enough to approach the uniform distribution.

\paragraph{Acknowledgements:} I thank Nicolas Curien for carefully reading earlier versions of this manuscript. I also thank the anonymous referee for his useful comments. I acknowledge the support of ANR Liouville (ANR-15-CE40-0013) and ANR GRAAL (ANR-14-CE25-0014).

\section{Combinatorial preliminaries and couplings}

For all $n \geq 3$, we recall that $\mathscr{T}_n$ is the set of rooted type-I triangulations of the sphere with $n$ vertices. For $n \geq 0$ and $p \geq 1$ we also write $\mathscr{T}_{n,p}$ for the set of triangulations with a boundary of length $p$ and $n$ inner vertices, that is, planar maps with $n+p$ vertices in which all faces are triangles except one called the \textit{outer face} whose boundary is a simple cycle of length $p$, equipped with a root edge such that the outer face touches the root edge on its right. We will sometimes refer to $n$ and $p$ as the \textit{volume} and the \textit{perimeter} of the triangulation.

The number of triangulations with fixed volume and perimeter can be computed by a result of Krikun. Here is a special case of the main theorem of \cite{Kri07} (the full theorem deals with triangulations with $r+1$ boundaries but we only use the case $r=0$):
\begin{equation}\label{enumeration}
\# \mathscr{T}_{n,p}=\frac{p(2p)!}{(p!)^2} \frac{4^{n-1} (2p+3n-5)!!}{n! (2p+n-1)!!} \underset{n \to +\infty}{\sim} C(p) \lambda_c^{-n} n^{-5/2},
\end{equation}
where $\lambda_c=\frac{1}{12 \sqrt{3}}$ and $C(p) = \frac{3^{p-2} p (2p)!}{4 \sqrt{2 \pi} (p!)^2}$. In particular, a triangulation of the sphere with $n$ vertices is equivalent after a root transformation to a triangulation with a boundary of length $1$ and $n-1$ inner vertices (more precisely we need to duplicate the root edge, add a loop inbetween and root the map at this new loop, see for example Figure 2 in \cite{CLGmodif}), so
\begin{equation} \label{enumerationSphere}
 \# \mathscr{T}_n = \# \mathscr{T}_{n-1,1} = 2 \frac{4^{n-2} \, (3n-6)!!}{(n-1)! \, n!!}.
\end{equation}

For $n \geq 0$ and $p \geq 1$ we write $T_{n,p}$ for a uniform triangulation with a boundary of length $p$ and $n$ inner vertices, and $T_n$ for a uniform triangulation of the sphere with $n$ vertices. We also recall that the UIPT, that we write $T_{\infty}$, is an infinite rooted planar triangulation whose distribution is characterized by the following equality. For any rooted triangulation $t$ with a hole of perimeter $p$,
\begin{equation} \label{UIPT}
\P \left( t \subset T_{\infty} \right)=C(p) \lambda_c^{|t|},
\end{equation}
where $\lambda_c$ and the $C(p)$ are as above, $|t|$ is the total number of vertices of $t$ and by $t \subset T_{\infty}$ we mean that $T_{\infty}$ can be obtained by filling the hole of $t$ with an infinite triangulation with a boundary of length $p$.

In what follows we will use several times peeling explorations of random triangulations, see section 4.1 of \cite{CLGpeeling} for a general definition. Let $t$ be a triangulation and $\mathscr{A}$ be a peeling algorithm, that is, a way to assign to every finite triangulation with one hole an edge on the boundary of the hole. We write $t^{\mathscr{A}}_j(t)$ for the part of $t$ discovered after $j$ steps of filled-in peeling following algorithm $\mathscr{A}$. By "filled-in" we mean that everytime the peeled face separates the unknown part of the map in two connected components we reveal the one with fewer vertices (if the two components have the same number of vertices we reveal one component picked deterministically). If the map is infinite and one-ended, we reveal the bounded component.

From the enumeration formulas it is possible to deduce precise coupling results between finite and infinite maps. The result we will need is similar to Proposition 12 of \cite{HBP} but a bit more general since it deals with triangulations with a boundary. We recall that in a triangulation $t$ of the sphere or the plane, the \textit{ball} of radius $r$, that we write $B_r(t)$, is the triangulation with holes formed by those faces adjacent to at least one vertex lying at distance at most $r-1$ from the root, along with all their edges and vertices. If $t$ is infinite, the \textit{hull} of radius $r$, that we write $B_r^{\bullet}(t)$, is the union of $B_r (t)$ and all the bounded connected components of its complement. If $t$ is finite, it is the union of $B_r (t)$ and all the connected components of its complement except the one that contains the most vertices (if there is a tie, we pick deterministically a component among those which contain the most vertices). If $T$ is a triangulation with a boundary, we adopt the same definitions but we replace the distance to the root by the distance to the boundary.

\begin{lem}\label{couplagebord}
Let $p_n=o(\sqrt{n})$ and $r_n=o(n^{1/4})$ with $p_n=o(r_n^2)$. Then there are $r'_n=o(r_n)$ and couplings between $T_{n, p_n}$ and $T_{\infty}$ such that
\[ \mathbb{P} \left( B^{\bullet}_{r_n} (T_{\infty}) \backslash B^{\bullet}_{r'_n} (T_{\infty}) \subset B^{\bullet}_{r_n} (T_{n,p_n}) \right) \xrightarrow[n \to +\infty]{} 1. \]
\end{lem}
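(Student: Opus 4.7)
The plan is to construct the coupling by simultaneous filled-in peeling explorations of $T_{n,p_n}$ and of $T_\infty$, using the enumeration \eqref{enumeration} together with the characterization \eqref{UIPT} to establish absolute continuity between the laws of the two explorations. I would fix a peeling-by-layers algorithm $\mathscr{A}$ revealing the hulls $B^\bullet_r$ one layer at a time, and run it on both triangulations: on $T_\infty$ starting from its root edge, on $T_{n,p_n}$ starting from its boundary. The quantitative ball-growth estimates of \cite{CLGpeeling} give $|B^\bullet_{r_n}(T_\infty)| \leq (\log n)\, r_n^4 = o(n)$ and $|\partial B^\bullet_{r_n}(T_\infty)| \leq (\log n)\, r_n^2$ with high probability, so the number of peeling steps $K_{r_n}$ needed to reveal $B^\bullet_{r_n}(T_\infty)$ satisfies $K_{r_n} = o(n)$.

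The core absolute-continuity estimate is as follows. Once the two peeling explorations have been brought to a common ``state'' consisting of a triangulation with one hole of perimeter $P_0$, the conditional laws of the unexplored regions are the UIPT-with-boundary $P_0$ (for $T_\infty$) and the uniform measure on $\mathscr{T}_{n', P_0}$ (for $T_{n,p_n}$, with $n' = n+p_n - V$ where $V$ is the revealed volume). For a further sub-exploration with revealed sub-region $e$ and new hole perimeter $P$, \eqref{UIPT} gives the UIPT-with-boundary probability $\tfrac{C(P)}{C(P_0)} \lambda_c^{|e|-P_0}$, while \eqref{enumeration} gives the finite probability $\#\mathscr{T}_{n'-|e|+P_0,P}/\#\mathscr{T}_{n',P_0}$. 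Plugging in the asymptotic $\#\mathscr{T}_{m,q} \sim C(q) \lambda_c^{-m} m^{-5/2}$, the Radon--Nikodym derivative telescopes to $(n'/(n'-|e|+P_0))^{5/2}(1+o(1))$, which tends to $1$ uniformly in $P$ as long as $|e| = o(n')$. Hence a synchronized continuation of the two peelings succeeds with probability $1-o(1)$, since the total volume explored up to $B^\bullet_{r_n}(T_\infty)$ is $o(n)$.

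The coupling is then built in two stages. First, one chooses $r'_n = o(r_n)$ with $p_n = o((r'_n)^2)$ (possible by the hypothesis $p_n = o(r_n^2)$), and runs the two peelings independently until each has revealed its hull $B^\bullet_{r'_n}$. By \cite{CLGpeeling} the hole perimeters at this point are of order $(r'_n)^2 \gg p_n$, and the initial-boundary discrepancy (length $p_n$ in the finite map vs.\ essentially $0$ in the UIPT) becomes negligible; a direct comparison via \eqref{enumeration} shows that the two unexplored regions, conditionally on a common hole perimeter, have laws close in total variation, and the two explorations can be synchronized. Second, one continues the peeling identically in both maps until $B^\bullet_{r_n}(T_\infty)$ is reached; by the R-N estimate above this coupling succeeds with probability $1-o(1)$, and the resulting annulus $B^\bullet_{r_n}(T_\infty) \setminus B^\bullet_{r'_n}(T_\infty)$ is identified with a sub-annulus of $B^\bullet_{r_n}(T_{n,p_n})$, yielding the lemma.

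The main technical obstacle is the synchronization step: the hypothesis $p_n = o(r_n^2)$ is precisely what ensures that, by radius $r'_n$, the hole perimeter has grown large enough to absorb the initial-boundary perturbation of length $p_n$, so the two explorations can be brought to a common state. Making ``synchronization'' quantitative requires matching not only the perimeter but the entire interface between explored and unexplored, which one handles by bootstrapping the same absolute-continuity bound to compare the distributions of the two interfaces.
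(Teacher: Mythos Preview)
Your overall architecture---absolute continuity via the enumeration asymptotics \eqref{enumeration}, and a two-stage coupling with an inner scale $r'_n$ absorbing the boundary discrepancy---is the right shape, and matches the paper. But the ``synchronization step'' that you flag as the main obstacle is a genuine gap, and the fix you sketch (``bootstrapping the same absolute-continuity bound to compare the distributions of the two interfaces'') is circular. After peeling $T_\infty$ from its root and $T_{n,p_n}$ from its boundary independently to radius $r'_n$, the two hole perimeters $P_1=|\partial B^\bullet_{r'_n}(T_\infty)|$ and $P_2=|\partial B^\bullet_{r'_n}(T_{n,p_n})|$ are different random integers; there is no reason for them to match, and coupling them in total variation would require controlling the law of $P_2$, which is exactly the kind of statement about $T_{n,p_n}$ near its boundary that you are trying to prove. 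Conditioning on ``a common hole perimeter'' does not make sense without first establishing such a coupling.

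The paper resolves this cleanly by inserting an intermediate object, the UIPT with boundary $T_{\infty,p_n}$, and coupling in two genuinely separate steps. First, peel $T_\infty$ by layers and stop at the \emph{hitting time} $\tau_{p_n}$ when the hole perimeter first equals $p_n$; by the spatial Markov property the complement is \emph{exactly} distributed as $T_{\infty,p_n}$, with no approximation needed. The assumption $p_n=o(r_n^2)$ together with the scaling of \cite{CLGpeeling} gives $\tau_{p_n}=o(r_n^3)$, hence the revealed region sits inside a hull of radius $r'_n=o(r_n)$. Second, since $T_{\infty,p_n}$ and $T_{n,p_n}$ now have the \emph{same} boundary length, your Radon--Nikodym computation applies directly (this is the analogue of Proposition~12 of \cite{HBP}) to couple their hulls of radius $r_n$. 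The stopping-time trick is the missing idea: it converts the awkward comparison ``point-rooted vs.\ $p_n$-boundary'' into an exact embedding, so no interface-matching is ever required.
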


The above lemma follows from the following. There is a cycle $\gamma'$ of length $p_n$ around the root of $T_{\infty}$ that lies inside of its hull of radius $r'_n$ and a cycle $\gamma$ in $T_{n,p_n}$ that stays at distance at most $r_n$ from its boundary, such that the part of the hull of radius $r_n$ of $T_{\infty}$ that lies outside of $\gamma'$ is isomorphic to the part of $T_{n,p_n}$ that lies between its boundary and $\gamma$ (see Figure \ref{figurecoupling}).

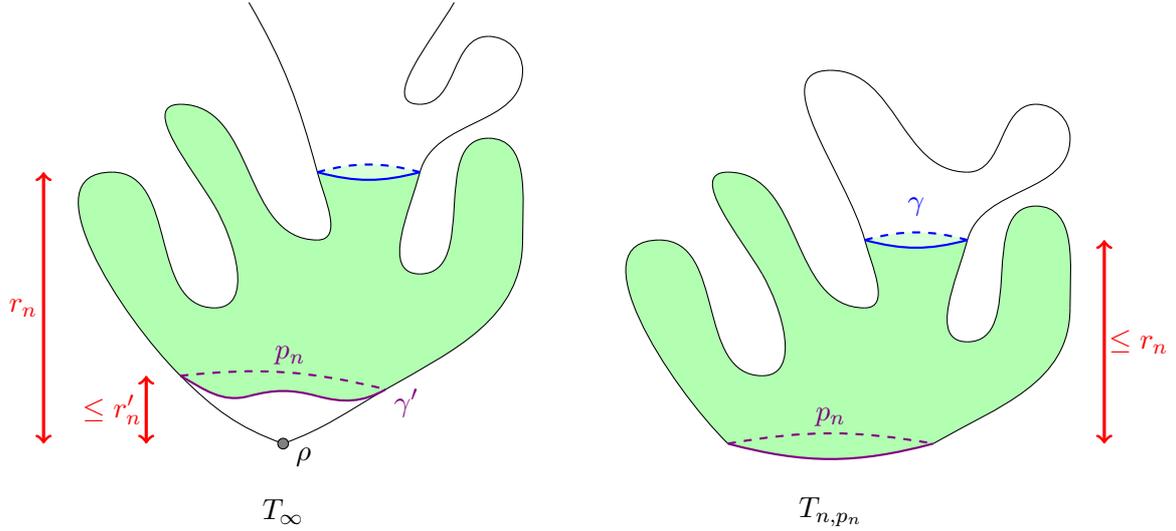
\begin{figure}
\begin{center}
\begin{tikzpicture}[scale=0.9]
\fill[green!15] (0.5,4) to[bend right =15] (2,4) to[bend right =15] (0.5,4);
\fill[green!30] (-1.5,1) to[out=330, in=195] (-0.5,0.7) to[out=15, in=165] (0.5,0.7) to[out=345, in=210] (1.5,0.8) to[out=30, in=270] (3.5,3) to[out=90, in=0] (3,4.5) to[out=180, in=0] (2,2.5) to[out=180, in=255] (2,4) to[bend left =15] (0.5,4) to[in=0, out=285] (0.5,3) to[in=0, out=180] (-1.5,5) to[in=120, out=180] (-1,3.5) to[in=0, out=300] (-1,2) to[in=0, out=180] (-2.5,4) to[in=135, out=180] (-1.5,1);

\draw(0,0) to[out=160, in=315] (-1.5,1);
\draw(0,0)node{} to[out=20, in=210] (1.5,0.8);
\draw(-1.5,1) to[out=135, in=180] (-2.5,4);
\draw(-2.5,4) to[out=0, in=180] (-1,2);
\draw(-1,2) to[out=0, in=300] (-1,3.5);
\draw(-1,3.5) to[out=120, in=180] (-1.5,5);
\draw(-1.5,5) to[out=0, in=180] (0.5,3);
\draw(1.5,0.8) to[out=30, in=270] (3.5,3);
\draw(3.5,3) to[out=90, in=0] (3,4.5);
\draw(3,4.5) to[out=180, in=0] (2,2.5);
\draw(0.5,3) to[out=0, in=285] (0.5,4);
\draw(2,2.5) to[out=180, in=255] (2,4);

\draw(0.5,4) to[out=105, in=300] (-0.5,6.5);
\draw(2,4) to[out=75, in=270] (3.5,5.5);
\draw(3.5,5.5) to[out=90, in=0] (3,6);
\draw(3,6) to[out=180, in=0] (2,5);
\draw(2,5) to[out=180, in=240] (2.5,6.5);

\draw[violet, thick] (-1.5,1) to[out=330, in=195] (-0.5,0.7);
\draw[violet, thick] (-0.5,0.7) to[out=15, in=165] (0.5,0.7);
\draw[violet, thick] (0.5,0.7) to[out=345, in=210] (1.5,0.8);
\draw[violet, dashed, thick] (-1.5,1) to[bend left =10] (1.5,0.8);
\draw[blue, thick] (0.5,4) to[bend right =15] (2,4);
\draw[blue, dashed, thick] (0.5,4) to[bend left =15] (2,4);

\draw[<->, red, very thick] (-2,0)--(-2,1);
\draw[red] (-2.5,0.5) node[texte]{$\leq r'_n$};
\draw (0.3,-0.2) node[texte]{$\rho$};
\draw[violet] (1.8,0.6) node[texte]{$\gamma'$};
\draw[violet] (0.1,1.3) node[texte]{$p_n$};
\draw[<->, red, very thick] (-3.5,0)--(-3.5,4);
\draw[red] (-3.8,2) node[texte]{$r_n$};
\draw (0,-1) node[texte]{$T_{\infty}$};

\begin{scope}[shift={(8,-1)}]
\fill[green!15] (0.5,4) to[bend right =15] (2,4) to[bend right =15] (0.5,4);
\fill[green!30] (-1.5,1) to[bend right =15] (1.5,1) to[out=30, in=270] (3.5,3) to[out=90, in=0] (3,4.5) to[out=180, in=0] (2,2.5) to[out=180, in=255] (2,4) to[bend left =15] (0.5,4) to[in=0, out=285] (0.5,3) to[in=0, out=180] (-1.5,5) to[in=120, out=180] (-1,3.5) to[in=0, out=300] (-1,2) to[in=0, out=180] (-2.5,4) to[in=135, out=180] (-1.5,1);

\draw(-1.5,1) to[out=135, in=180] (-2.5,4);
\draw(-2.5,4) to[out=0, in=180] (-1,2);
\draw(-1,2) to[out=0, in=300] (-1,3.5);
\draw(-1,3.5) to[out=120, in=180] (-1.5,5);
\draw(-1.5,5) to[out=0, in=180] (0.5,3);
\draw(1.5,1) to[out=30, in=270] (3.5,3);
\draw(3.5,3) to[out=90, in=0] (3,4.5);
\draw(3,4.5) to[out=180, in=0] (2,2.5);
\draw(0.5,3) to[out=0, in=285] (0.5,4);
\draw(2,2.5) to[out=180, in=255] (2,4);
\draw(2,4) to[out=75, in=270] (3.5,5.5);
\draw(3.5,5.5) to[out=90, in=0] (3,6);
\draw(3,6) to[out=180, in=0] (2,5);
\draw(0.5,4) to[out=105, in=180] (0,6.5);
\draw(0,6.5) to[out=0, in=180] (2,5);

\draw[violet, thick] (-1.5,1) to[bend right =15] (1.5,1);
\draw[violet, dashed, thick] (-1.5,1) to[bend left =10] (1.5,1);
\draw[blue, thick] (0.5,4) to[bend right =15] (2,4);
\draw[blue, dashed, thick] (0.5,4) to[bend left =15] (2,4);

\draw[<->, red, very thick] (4,1)--(4,4);
\draw[red] (4.5,2.5) node[texte]{$\leq r_n$};
\draw[violet] (0,1.4) node[texte]{$p_n$};
\draw[blue] (1.25,4.5) node[texte]{$\gamma$};
\draw (0,0) node[texte]{$T_{n,p_n}$};
\end{scope}
\end{tikzpicture}
\end{center}
\vspace{-5mm}
\caption{Illustration of Lemma \ref{couplagebord}. With high probability, there are two cycles $\gamma'$ and $\gamma$ such that the two green parts coincide.} \label{figurecoupling}
\end{figure}

\begin{proof}
We start by describing a coupling between the UIPT and the UIPT with a boundary of length $p_n$, that we write $T_{\infty,p_n}$. We consider the peeling by layers $\mathscr{L}$ of the UIPT (see section 4.1 of \cite{CLGpeeling}) and we write $\tau_{p_n}$ for the first time at which the perimeter of the discovered region is equal to $p_n$ (note that this time is always finite since the perimeter can increase by at most $1$ at each peeling step). By the spatial Markov property of the UIPT, the part that is still unknown at time $\tau_{p_n}$ has the distribution of $T_{\infty,p_n}$. Moreover, by the results of Curien and Le Gall (Theorem 1 of \cite{CLGpeeling}), since $p_n=o(r_n^2)$, we have $\tau_{p_n}=o(r_n^3)$. By using Proposition 9 of \cite{CLGpeeling} (more precisely the convergence of $H$), we obtain that the smallest hull of $T_{\infty}$ containing $t_{\tau_{p_n}}^{\mathscr{L}} \left( T_{\infty} \right)$ has radius $o(r_n)$ in probability. Hence, our result holds if we replace $T_{n, p_n}$ by $T_{\infty, p_n}$.

Hence, it is enough to prove that there are couplings between $T_{\infty,p_n}$ and $T_{n,p_n}$ such that 
\[ \mathbb{P} \left(  B^{\bullet}_{r_n}(T_{n,p_n})=B^{\bullet}_{r_n}(T_{\infty,p_n}) \right) \xrightarrow[n \to +\infty]{} 1.\]
The proof relies on asymptotic enumeration results and is essentially the same as that of Proposition 12 of \cite{HBP}: by using the above coupling of $T_{\infty, p_n}$ and $T_{\infty}$ we can show that
\[ \left( \frac{1}{\sqrt{n}} |\partial B^{\bullet}_{r_n}(T_{\infty, p_n})|, \frac{1}{n} |B^{\bullet}_{r_n}(T_{\infty, p_n})| \right) \xrightarrow[n \to +\infty]{(P)} (0,0). \]
Moreover, if $q_n=o(\sqrt{n})$ and $v_n=o(n)$ and if $t_n$ is a triangulation with two holes of perimeters $p_n$ and $q_n$ (rooted on the boundary of the $p_n$-gon) and $v_n$ vertices that is a possible value of $B^{\bullet}_{r_n}(T_{\infty, p_n})$ for all $n \geq 0$, then
\[ \frac{\mathbb{P} \left( B^{\bullet}_{r_n}(T_{n, p_n})=t_n \right)}{\mathbb{P} \left( B^{\bullet}_{r_n}(T_{\infty, p_n})=t_n \right)} \xrightarrow [n \to +\infty]{} 1\]
by the enumeration results, and we can conclude as in Proposition 12 of \cite{HBP}.
\end{proof}

We will also need another coupling lemma where we do not compare hulls of a fixed radius, but rather the parts of triangulations that have been discover after a fixed number of peeling steps.

\begin{lem}
Let $j_n=o(n^{3/4})$, and let $\mathscr{A}$ be a peeling algorithm. Then there are couplings between $T_{n}$ and $T_{\infty}$ such that
\[\mathbb{P} \left( t_{j_n}^{\mathscr{A}}(T_{n})=t_{j_n}^{\mathscr{A}}(T_{\infty}) \right) \xrightarrow[n \to +\infty]{} 1.\]
\end{lem}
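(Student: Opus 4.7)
The plan is to follow the same strategy as Lemma \ref{couplagebord}: first bound the size of the region explored after $j_n$ peeling steps in the UIPT, then compare the laws of the explored region under $T_n$ and $T_\infty$ using their Radon--Nikodym derivative.

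The first step uses the peeling estimates of Curien and Le Gall (Theorem~1 of \cite{CLGpeeling}): after $j$ filled-in peeling steps of the UIPT, the perimeter and the number of vertices of the explored region typically scale like $j^{2/3}$ and $j^{4/3}$ respectively. The condition $j_n = o(n^{3/4})$ is precisely what ensures that these two quantities are $o(\sqrt{n})$ and $o(n)$ with probability tending to $1$, matching the scalings used in Lemma \ref{couplagebord}.

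For the second step, since $\mathscr{A}$ is deterministic, whether a given triangulation $t$ with a hole is reachable via $\mathscr{A}$ in exactly $j_n$ steps is a deterministic property of $t$; therefore, for any such admissible $t$,
\[
\frac{\P\bigl(t_{j_n}^{\mathscr{A}}(T_n)=t\bigr)}{\P\bigl(t_{j_n}^{\mathscr{A}}(T_\infty)=t\bigr)} = \frac{\P(t\subset T_n)}{\P(t\subset T_\infty)}.
\]
Under $T_\infty$ the denominator is given by \eqref{UIPT}. Under $T_n$, using the root transformation identifying $T_n$ with $T_{n-1,1}$, the numerator equals $\#\mathscr{T}_{n',p}/\#\mathscr{T}_n$ with $n' = n - O(v)$, where $v$ is the vertex count of $t$. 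Plugging in the asymptotic enumeration formulas \eqref{enumeration} and \eqref{enumerationSphere}, the ratio is seen to converge to $1$ uniformly over $t$ satisfying $p = o(\sqrt{n})$ and $v = o(n)$, by the same computation that underlies Proposition~12 of \cite{HBP}. A standard coupling argument then yields the desired joint distribution of $T_n$ and $T_\infty$.

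The main technical hurdle is the consistency of the filled-in convention between the two sides of the coupling: when a peeling step splits the unknown region, in $T_n$ one fills in the smaller component by vertex count while in $T_\infty$ one fills in the finite component. On the high-probability event that the explored volume stays $o(n)$, every split in $T_n$ produces one component of size at least $n/2$ and one of size $o(n)$, so under the coupling the large $T_n$-component naturally corresponds to the infinite $T_\infty$-component and the two conventions agree throughout the exploration; this compatibility is intuitively clear but must be verified inductively along the peeling.
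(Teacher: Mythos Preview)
Your proof is correct and follows essentially the same approach as the paper: bound the perimeter and volume of the explored region in $T_\infty$ using \cite{CLGpeeling}, then compare the laws via the ratio $\P(t\subset T_n)/\P(t\subset T_\infty)$ and conclude as in Proposition~12 of \cite{HBP}. Your final paragraph on the compatibility of the filled-in conventions is a point the paper's proof passes over in silence; your observation that any component contained in a region of volume $o(n)$ is necessarily the smaller one in $T_n$ is exactly what makes the displayed identity of ratios valid for such $t$, so it is a welcome clarification rather than a divergence from the paper's argument.
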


\begin{proof}
We write $P_{\infty}(j)$ and $V_{\infty}(j)$ for respectively the perimeter and volume of $t_{j}^{\mathscr{A}}(T_{\infty})$.
By the results of \cite{CLGpeeling} we have the convergences
\begin{equation}\label{PVinftysmall}
\frac{1}{\sqrt{n}} \sup_{0 \leq j \leq j_n} P_{\infty}(j) \xrightarrow[n \to +\infty]{} 0 \hspace{5mm} \mbox{and} \hspace{5mm} \frac{1}{n} \sup_{0 \leq j \leq j_n} V_{\infty}(j) \xrightarrow[n \to +\infty]{} 0
\end{equation}
in probability, so there are $p_n=o(\sqrt{n})$ and $v_n=o(n)$ such that 
\[\mathbb{P} \left( P_{\infty}(j_n) \leq p_n \mbox{ and } V_{\infty}(j_n) \leq v_n \right) \to 1.\]
But by the enumeration results \eqref{enumeration}, \eqref{enumerationSphere} and by \eqref{UIPT}, if $t_n$ is a rooted triangulation with perimeter at most $p_n$ and volume at most $v_n$, we have
\[\frac{\P \left(t_{j_n}^{\mathscr{A}} (T_n)=t_n \right)}{\P \left(t_{j_n}^{\mathscr{A}} (T_{\infty})=t_n \right)} = \frac{\mathbb{P} \big( t_n \subset T_{n} \big)}{\mathbb{P} \big( t_n \subset T_{\infty} \big)} \xrightarrow[n \to +\infty]{} 1.\]
As in Proposition 12 of \cite{HBP}, this proves that the total variation distance between the distributions of $t_{j_n}^{\mathscr{A}}(T_{n})$ and $t_{j_n}^{\mathscr{A}}(T_{\infty})$ goes to $0$ as $n \to +\infty$, which proves our claim and the lemma.
\end{proof}

By combining this last lemma and the estimates \eqref{PVinftysmall}, we immediately obtain estimates about the peeling process on finite uniform triangulations. We write $P_n(j)$ and $V_n(j)$ for the perimeter and volume of $t^{\mathscr{A}}_j(T_n)$.

\begin{corr} \label{estimatesPV}
Let $j_n=o(n^{3/4})$. Then we have the following convergences in probability:
\[ \frac{1}{\sqrt{n}} \sup_{0 \leq j \leq j_n} P_n(j) \xrightarrow[n \to +\infty]{} 0 \hspace{5mm} \mbox{and} \hspace{5mm} \frac{1}{n} \sup_{0 \leq j \leq j_n} V_n(j) \xrightarrow[n \to +\infty]{} 0.\]
\end{corr}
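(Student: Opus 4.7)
The plan is to transfer the estimates \eqref{PVinftysmall}, which are stated for the peeling of the UIPT, to the finite setting via the coupling provided by the previous lemma. The key observation is that the peeling exploration is a deterministic function of the underlying triangulation and of the algorithm $\mathscr{A}$: if $t_{j_n}^{\mathscr{A}}(T_n) = t_{j_n}^{\mathscr{A}}(T_\infty)$, then in fact $t_{j}^{\mathscr{A}}(T_n) = t_{j}^{\mathscr{A}}(T_\infty)$ for every $0 \leq j \leq j_n$, since the $j$-th step explored region is simply read off from the $j_n$-th one. In particular, on this event one has $P_n(j) = P_\infty(j)$ and $V_n(j) = V_\infty(j)$ simultaneously for all $j \leq j_n$.

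Given $\varepsilon > 0$, I would first apply the previous lemma with the same peeling algorithm $\mathscr{A}$ and the same value $j_n = o(n^{3/4})$ to obtain couplings of $T_n$ and $T_\infty$ such that the probability of the event $\mathcal{E}_n := \{t_{j_n}^{\mathscr{A}}(T_n) = t_{j_n}^{\mathscr{A}}(T_\infty)\}$ tends to $1$. Then, on $\mathcal{E}_n$, one has
\[ \sup_{0 \leq j \leq j_n} P_n(j) = \sup_{0 \leq j \leq j_n} P_\infty(j) \quad \text{and} \quad \sup_{0 \leq j \leq j_n} V_n(j) = \sup_{0 \leq j \leq j_n} V_\infty(j). \]

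Hence for any $\eta > 0$,
\[ \mathbb{P}\!\left( \frac{1}{\sqrt{n}} \sup_{0 \leq j \leq j_n} P_n(j) > \eta \right) \leq \mathbb{P}(\mathcal{E}_n^c) + \mathbb{P}\!\left( \frac{1}{\sqrt{n}} \sup_{0 \leq j \leq j_n} P_\infty(j) > \eta \right), \]
and the same bound holds for $V_n/n$. The first term tends to $0$ by the previous lemma, and the second tends to $0$ by \eqref{PVinftysmall}. This yields both convergences in probability.

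There is no real obstacle here, since the work has already been done in the coupling lemma and in \eqref{PVinftysmall}; the only point worth noting is the deterministic remark above that lets us upgrade the coupling at the single time $j_n$ to an identity of the whole peeling trajectory on $[0, j_n]$.
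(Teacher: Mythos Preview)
Your proof is correct and follows exactly the approach the paper indicates: combine the coupling lemma with the estimates \eqref{PVinftysmall}. Your explicit observation that equality of the explored regions at time $j_n$ forces equality of the entire peeling trajectory on $[0,j_n]$ (since each step is determined by the map $t_{j_n}^{\mathscr{A}}$ itself) is the only thing the paper leaves implicit, and you have handled it correctly.
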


Finally, we show a result about small cycles surrounding the boundary in uniform triangulations with a perimeter small enough compared to their volume.

\begin{lem}\label{smallcycle}
Let $p_n=o(\sqrt{n})$ and $r_n=o(n^{1/4})$ be such that $p_n=o(r_n^2)$. Then for all $\eps>0$, the probability of the event
\begin{center}
"there is a cycle $\gamma$ in $T_{n,p_n}$ of length at most $r_n$ such that the part of $T_{n,p_n}$ lying between $\partial T_{n,p_n}$ and $\gamma$ contains at most $\eps n$ vertices"
\end{center}
goes to $1$ as $n \to +\infty$.
\end{lem}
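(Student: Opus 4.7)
The plan is to couple $T_{n,p_n}$ with the UIPT $T_\infty$ via Lemma \ref{couplagebord}, then invoke the theorem of Krikun \cite{Kri04} on short separating cycles in $T_\infty$ to locate a short cycle inside the coupled region. The Lemma \ref{couplagebord} is precisely tailored for this: both hypotheses $r_n=o(n^{1/4})$ and $p_n=o(r_n^2)$ match what we are given.

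Concretely, I would first apply Lemma \ref{couplagebord} with the given $r_n$. With probability tending to $1$, this yields a cycle $\gamma_0$ in $T_{n,p_n}$ at distance at most $r_n$ from $\partial T_{n,p_n}$, together with a cycle $\gamma'$ of length $p_n$ inside $B^{\bullet}_{r'_n}(T_\infty)$ (with $r'_n=o(r_n)$), such that the annular region of $T_{n,p_n}$ between $\partial T_{n,p_n}$ and $\gamma_0$ is isomorphic to the annular region of $T_\infty$ between $\gamma'$ and $\partial B^{\bullet}_{r_n}(T_\infty)$. The volume of this coupled region is at most $|B^{\bullet}_{r_n}(T_\infty)|$, which by the scaling results of \cite{CLGpeeling} (the convergence of the rescaled peeling process) is of order $r_n^4=o(n)$ in probability. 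Hence, for any fixed $\eps>0$, this volume is at most $\eps n$ with high probability, no matter which separating cycle in this region we select.

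It then remains to exhibit, inside the coupled annular region, a cycle of length at most $r_n$ separating its two boundary components. Equivalently, via the isomorphism, I need a cycle in $T_\infty$ of length at most $r_n$ lying in the annulus between $\gamma'$ (at distance $o(r_n)$ from the root) and $\partial B^{\bullet}_{r_n}(T_\infty)$ and separating the root from infinity. This is where Krikun's result \cite{Kri04} enters: in the UIPT, short separating cycles appear at every scale, and in particular with high probability there is a separating cycle of length $o(r_n)$ passing through the required annular region. Pulling back via the coupling produces the desired cycle $\gamma$ in $T_{n,p_n}$.

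The main delicate point is making the invocation of Krikun's result quantitative at the correct scale: one has to guarantee that the short separating cycle of $T_\infty$ provided by \cite{Kri04} can be placed strictly between $\gamma'$ and $\partial B^{\bullet}_{r_n}(T_\infty)$, i.e., at distance between $r'_n$ and $r_n$ from the root. Since $r'_n=o(r_n)$, there is room for this window to contain such a cycle with high probability, but matching the scales carefully is where the work lies.
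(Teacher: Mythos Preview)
Your approach is essentially the same as the paper's: couple via Lemma \ref{couplagebord}, bound the volume of the coupled region by $|B^{\bullet}_{r_n}(T_\infty)|=o(n)$ using \cite{CLGpeeling}, and invoke Krikun's result to find the short separating cycle inside the coupled annulus. The one point you leave slightly vague but correctly flag as delicate is exactly how the paper resolves it: apply Theorem \ref{Krikuncycle} at scale $r'_n$, obtaining a cycle of length $\leq C r'_n$ surrounding $B^{\bullet}_{r'_n}(T_\infty)$ and contained in $B^{\bullet}_{2r'_n}(T_\infty)$; since $r'_n=o(r_n)$, both $C r'_n \leq r_n$ and $2r'_n \leq r_n$ hold for $n$ large, so the cycle lands in the coupled region with the right length bound.
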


This result is not surprising. In the context of quadrangulations with a non-simple boundary, it is a consequence of the convergence of quadrangulations with boundaries to Brownian disks, see \cite{BM15}. However, no scaling limit result is known yet for triangulations with boundaries. Hence, we will rely on a result of Krikun about small cycles in the UIPT, that we will combine with Lemma \ref{couplagebord}. Here is a restatement of Theorem 6 of \cite{Kri04}.

\begin{thm}[Krikun] \label{Krikuncycle}
For all $\eps>0$, there is a constant $C$ such that for all $r$, with probability at least $1-\eps$ there is a cycle of length at most $Cr$ surrounding $B_r^{\bullet}(T_{\infty})$ and lying in $B_{2r}^{\bullet}(T_{\infty})$.
\end{thm}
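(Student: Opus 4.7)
The approach is to combine a peeling exploration of $T_\infty$ between layers $r$ and $2r$ with an amplification argument based on the spatial Markov property.

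First, I would run the filled-in peeling by layers $\mathscr{L}$ of $T_\infty$; let $P_k$ denote the perimeter of the explored region $t_k^{\mathscr{L}}(T_\infty)$ after $k$ steps, and $\tau_r$ the first $k$ at which $t_k^{\mathscr{L}}(T_\infty) \supseteq B_r^{\bullet}(T_\infty)$. By Theorem~1 and Proposition~9 of \cite{CLGpeeling}, $\tau_r$ is of order $r^3$ and $P_{\tau_r}$ is of order $r^2$, in probability. By one-endedness of $T_\infty$ and the filled-in convention, the boundary of $t_k^{\mathscr{L}}(T_\infty)$ at any $k \in [\tau_r, \tau_{2r}]$ is a simple cycle surrounding the root and lying in $B_{2r}^{\bullet}(T_\infty)$, but its typical length $\Theta(r^2)$ is far larger than the target $\Theta(r)$. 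The idea is to find short separating cycles arising at particular peeling steps, namely steps where the third vertex of the peeled triangle is glued to a boundary vertex $v$ at boundary-distance of order $r$ from the peeled edge on one of its two sides. The corresponding short arc of the old boundary, together with the two edges of the peeled triangle, forms a cycle of length $\Theta(r)$; by the filled-in convention one can ensure (up to a bounded-factor loss) that this cycle surrounds the already-discovered region, hence the root, giving a separating cycle inside $B_{2r}^{\bullet}(T_\infty)$. The per-step probability of this event is polynomially small in $r$, but summed over the $\Theta(r^3)$ peeling steps performed between $\tau_r$ and $\tau_{2r}$ it should yield a constant lower bound $\delta > 0$ on the probability that at least one such short separating cycle is produced; the per-step probabilities can be computed explicitly from the transition kernel of the UIPT peeling, itself derived from \eqref{enumeration}.

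To go from a constant lower bound to a probability at least $1-\eps$, I would subdivide $[r, 2r]$ into $N$ sub-annuli $[r_i, r_{i+1}]$ with $r_i = r \cdot 2^{i/N}$. The spatial Markov property of the UIPT ensures that the complements of the nested hulls $B_{r_i}^{\bullet}(T_\infty)$ are, conditionally on these hulls, independent UIPTs with boundary. Therefore the events ``a separating cycle of length at most $C_0 r_{i+1}$ is produced during the peeling between radii $r_i$ and $r_{i+1}$'' are conditionally independent across $i$, each of probability at least $\delta$, and the probability that none of them occurs is at most $(1-\delta)^N$, which can be made smaller than $\eps$ by taking $N$ large; setting $C = 2C_0$ then yields the announced statement.

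The main obstacle is the single-scale quantitative estimate: showing that a single peeling step has probability at least $\Omega(r^{-3})$ of creating a short separating cycle of length of order $r$. This is a macroscopic swallowing event whose likelihood has to be controlled against the heavy-tailed negative-jump distribution of the perimeter process, and the argument must ensure that the created cycle separates the root rather than an atypically small sub-region. Krikun's original treatment in \cite{Kri04} sidesteps a direct dynamical analysis in favor of a purely enumerative approach: one compares the number of triangulations in which a prescribed short separating cycle appears in the annulus with the total count given by \eqref{enumeration}. In the UIPT setting, this reduces to evaluating ratios of Krikun's disk partition functions and can be handled via the local-limit-theorem-type asymptotics of $\# \mathscr{T}_{n,p}$.
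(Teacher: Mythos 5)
First, note that the paper does not prove this statement at all: it is quoted verbatim (as ``Theorem 6 of \cite{Kri04}'') and used as a black box, the only added remarks being that Krikun's type-II decomposition adapts to type-I triangulations via \cite{CLGmodif} and that the containment of the cycle in $B_{2r}^{\bullet}(T_{\infty})$ is implicit in Krikun's proof. Krikun's actual argument rests on the skeleton (layer) decomposition of the UIPT and the associated reverse branching process, i.e.\ on an enumerative analysis of the annulus between heights $r$ and $2r$ --- which your last paragraph gestures at but does not develop. Your main proposal is instead a dynamical, peeling-based construction, and it contains a step that fails, not merely a step left unproved.

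The problem is the mechanism by which the short cycle is supposed to arise. When a peeling step attaches the third vertex of the revealed triangle to a boundary vertex at arc-distance $\Theta(r)$ from the peeled edge, the cycle formed by that short arc together with the two new edges bounds the \emph{swallowed} component. Since the total perimeter at that stage is of order $r^{2}$, the side of length $\Theta(r)$ is, overwhelmingly, the finite side that gets filled in; the resulting cycle therefore encloses a small disk \emph{not} containing the root, and does not surround $B_r^{\bullet}(T_{\infty})$. The parenthetical ``up to a bounded-factor loss'' hides exactly this point. For your cycle to surround the explored region, the short arc must lie on the infinite side, i.e.\ the step must swallow all but $\Theta(r)$ of a boundary of length $\Theta(r^{2})$; equivalently, the perimeter process must jump from order $r^{2}$ down to order $r$ in one step. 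By Theorem 1 of \cite{CLGpeeling} the rescaled perimeter $P_{\lfloor r^{3}t \rfloor}/r^{2}$ converges to a process that is strictly positive at all positive times, so the probability that the perimeter ever drops to $O(r)$ during the $\Theta(r^{3})$ steps spent in the annulus tends to $0$; in particular no single-scale bound $\delta>0$ can hold for this event, and the claimed per-step probability $\Omega(r^{-3})$ is false (the negative-jump tail $\sim m^{-5/2}$ makes a jump of size $\Theta(r^{2})$ cost far more than $r^{-3}$). The short cycles of Krikun's theorem are simply not boundaries of peeling-explored regions at any time, which is why a genuinely different (skeleton/branching-process) decomposition is needed. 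Your amplification step --- conditional independence of the sub-annuli given the nested hulls, via the spatial Markov property --- is sound, but it has nothing to amplify.
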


Note that Krikun deals with type-II triangulations, i.e. with multiple edges but no loops, but the decomposition used in \cite{Kri04} is still valid and even a bit simpler in the type-I setting, see \cite{CLGmodif}. The fact that the cycle stays in $B_{2r}^{\bullet}(T_{\infty})$ is not in the statement of the theorem in \cite{Kri04} but it is immediate from its proof.

\begin{proof}[Proof of Lemma \ref{smallcycle}]
By Lemma \ref{couplagebord} it is possible to couple $T_{\infty}$ and $T_{n,p_n}$ in such a way that
\begin{equation} \label{finalcoupling}
\mathbb{P} \left( B^{\bullet}_{r_n} (T_{\infty}) \backslash B^{\bullet}_{r'_n} (T_{\infty}) \subset B^{\bullet}_{r_n} \left( T_{n,p_n} \right) \right) \xrightarrow[n \to +\infty]{} 1,
\end{equation}
where $r'_n=o(r_n)$. On the other hand, by Theorem \ref{Krikuncycle}, we have
\[ \mathbb{P} \left( \mbox{there is a cycle $\gamma$ of length $\leq r_n$ in $B_{2r'_n}^{\bullet}(T_{\infty})$ that surrounds $B^{\bullet}_{r'_n} \left( T_{\infty} \right)$} \right) \xrightarrow[n \to +\infty]{} 1.\]
For $n$ large enough we have $r_n \geq 2r'_n$ so if such a $\gamma$ exists in then it must stay in $B^{\bullet}_{r_n} \left( T_{\infty} \right)$. Since $r_n=o(n^{1/4})$, the probability that the number of vertices lying inside of $\gamma$ is greater than $\eps n$ goes to $0$ by Theorem 2 of \cite{CLGpeeling}.
But if the event of \eqref{finalcoupling} holds and if such a cycle exists in $T_{\infty}$, then in $T_{n,p_n}$ there is a cycle $\gamma$ of length at most $r_n$ such that the part of $T_{n,p_n}$ lying between $\partial T_{n,p_n}$ and $\gamma$ contains at most $\eps n$ vertices.
\end{proof}

\section{Proof of Theorem \ref{mainthm}}

Our main task will be to prove the following proposition.

\begin{prop} \label{propcycle}
Let $k_n=o(n^{5/4})$. Then there are $t_n \in \mathscr{T}_n$ and $\ell_n=o(n^{1/4})$ such that conditionally on $T_n(0)=t_n$, the probability that there is a cycle of length at most $\ell_n$ that separates $T_n(k_n)$ in two parts of volume at least $\frac{n}{4}$ goes to $1$ as $n \to +\infty$.
\end{prop}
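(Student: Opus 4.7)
\begin{idproof}
The plan is to start the flip chain from the ``barbell'' triangulation $t_n \in \mathscr{T}_n$ obtained by gluing two independent uniform triangulations of $\mathscr{T}_{m,1}$ with $m \approx n/2$ along their boundary loops, producing an initial separating cycle $\gamma_0$ of length~$1$ with $\approx n/2$ vertices on each side. Condition on $T_n(0) = t_n$ and maintain a revealed region $R_k$ with separating cycle $\gamma_k$ of length $P_k$, starting from $R_0 = $ side~$1$ and $P_0 = 1$. Classify each flip by the location of the flipped edge: (i) strictly inside $R_k$, which only rearranges the revealed side; (ii) strictly inside the unknown complement, which leaves the set of revealed faces fixed and, by the stationarity of the uniform measure under flips together with the spatial Markov property, preserves the conditional uniformity of the unknown region; (iii) on $\gamma_k$, which acts as one filled-in peeling step that absorbs into $R_{k+1}$ the face of the unknown side adjacent to that edge and updates $\gamma_{k+1}$ accordingly. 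An induction shows that at each time the unknown region is conditionally uniform given $R_k$, so along the boundary-flip subsequence we are driving a random adapted peeling exploration of a uniform triangulation with a boundary.

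Let $J_k$ be the number of type-(iii) flips up to time~$k$, write $\alpha_n := k_n/n^{5/4} \to 0$, and set $j_n := \alpha_n n^{1/2}$, so $j_n = o(n^{3/4})$. The scaling estimates of Curien--Le Gall~\cite{CLGpeeling} applied to the peeling of the unknown region (via the coupling arguments underlying Corollary~\ref{estimatesPV}) yield a constant $C$ and a high-probability event $A$ on which $P_k \leq C \sqrt{J_k \vee 1}$ for every $k$ with $J_k \leq j_n$. With the stopping time $\tau := \inf\{k \geq 0 : J_k \geq j_n\}$ and the conditional probability $P_k/(3n-6)$ that step $k+1$ is a boundary flip,
\[ \E \bigl[ J_{k_n \wedge \tau}\, ;\, A\bigr] \;\leq\; \frac{k_n \cdot C \sqrt{j_n}}{3n-6} \;=\; O\bigl(\sqrt{\alpha_n}\bigr)\cdot j_n, \]
so Markov's inequality gives $\P(\tau \leq k_n) + \P(A^c) \to 0$. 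Thus with high probability $J_{k_n} \leq j_n$, whence $P_{k_n} = O(\sqrt{j_n}) = o(n^{1/4})$ and, by Corollary~\ref{estimatesPV}, the revealed part of side~$2$ contains $o(n)$ vertices.

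Conditionally on $R_{k_n}$, the unknown region is a uniform triangulation of perimeter $p_n = P_{k_n} = o(n^{1/4})$ and with $n_2 \geq n/2 - o(n)$ interior vertices. Choose $r_n = n^{1/4}/\log n$, which satisfies $r_n = o(n^{1/4})$ and $p_n = o(r_n^2)$; Lemma~\ref{smallcycle} applied with $\eps = 1/16$ then produces, with high probability, a cycle $\gamma^*$ of length at most $r_n$ inside the unknown region such that the annular region between $\gamma_{k_n}$ and $\gamma^*$ contains at most $\eps n_2 \leq n/16$ vertices. In $T_n(k_n)$, this cycle $\gamma^*$ separates a piece containing side~$1$, of volume $\geq n/2 - 1 > n/4$, from a piece strictly inside $\gamma^*$, of volume $\geq n_2 - n/16 - p_n > n/4$ for $n$ large. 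Setting $\ell_n := r_n = o(n^{1/4})$ concludes.

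The main obstacle is the middle step: the bound $P_k = O(\sqrt{J_k})$ feeding the Markov estimate is itself a peeling estimate valid only when $J_k = o(n^{3/4})$, so the self-reference must be broken by the stopping-time / bootstrap argument, with the scale $j_n = \alpha_n n^{1/2}$ dictated by simultaneously forcing the Markov bound on $\tau$ to vanish and ensuring $\sqrt{j_n} = o(n^{1/4})$.
\end{idproof}
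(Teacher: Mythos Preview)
Your overall architecture is exactly the paper's: the same glued initial condition, the same three-way classification of flips, the same conditional-uniformity induction (Lemma~\ref{resteruniforme}), the same reduction to peeling estimates, and the same endgame via Lemma~\ref{smallcycle}. The only substantive divergence is in how you control the number $J_{k_n}$ of boundary flips: the paper instead lower-bounds the hitting time $\tau_{\eps n^{3/4}}$ by a sum of geometric waiting times (Lemma~\ref{estimateTau}), but your Markov/stopping-time bootstrap is a legitimate dual route.

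There is, however, a genuine error in your scaling input. The Curien--Le~Gall estimates do \emph{not} give $P_k \le C\sqrt{J_k \vee 1}$; the perimeter of the filled-in peeling after $j$ steps is of order $j^{2/3}$, not $j^{1/2}$ (this is why Corollary~\ref{estimatesPV} is stated at the scale $j_n=o(n^{3/4})$, $P=o(\sqrt{n})$). With the correct exponent your displayed Markov bound becomes
\[
\E\bigl[J_{k_n\wedge\tau}\,;\,A\bigr]\;\le\;\frac{C\,k_n\,j_n^{2/3}}{3n-6},
\]
and your choice $j_n=\alpha_n n^{1/2}$ no longer forces $\P(\tau\le k_n)\to 0$ (the ratio to $j_n$ is of order $\alpha_n^{2/3}n^{1/12}$, which need not vanish). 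The fix is easy: take instead $j_n=\alpha_n n^{3/4}$, so that $j_n=o(n^{3/4})$ and the ratio is $O(\alpha_n^{2/3})\to 0$. You then only get $P_{k_n}=o(\sqrt{n})$, not $o(n^{1/4})$, so your choice $r_n=n^{1/4}/\log n$ no longer guarantees $p_n=o(r_n^2)$; choose $r_n$ adaptively as the paper does, e.g.\ $r_n=n^{1/8}p_n^{1/4}$, and Lemma~\ref{smallcycle} applies as before.
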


We first define the initial triangulation $T_n(0)$ we will be interested in: let $T^1_n(0)$ and $T^2_n(0)$ be two independent uniform triangulations with a boundary of length $1$ and with respectively $\lfloor \frac{n-1}{2} \rfloor$ and $\lceil \frac{n-1}{2} \rceil$ inner vertices. We write $T_n(0)$ for the triangulation obtained by gluing together the boundaries of $T^1_n(0)$ and $T^2_n(0)$.

We will now perform an exploration of the triangulation while it gets flipped: the part $T^1_n$ will be considered as the "discovered" part and $T^2_n$ as the "unknown" part of the map. More precisely, we define by induction $T^1_n(k)$ and $T^2_n(k)$ such that $T_n(k)$ is obtained by gluing together the boundaries of $T^1_n(k)$ and $T^2_n(k)$. The two triangulations for $k=0$ are defined above. Now assume we have constructed $T^1_n(k)$ and $T^2_n(k)$. Then:
\begin{itemize}
\item
if $e_k$ lies inside of $T^1_n(k)$ then $T^1_n(k+1)=\mathfrak{flip}(T^1_n(k), e_k)$ and $T^2_n(k+1)=T^2_n(k)$,
\item
if $e_k$ lies inside of $T^2_n(k)$ then $T^1_n(k+1)=T^1_n(k)$ and $T^2_n(k+1)=\mathfrak{flip}(T^2_n(k), e_k)$,
\item
if $e_k \in \partial T^1_n(k)$, we write $f_k$ for the face of $T^2_n(k)$ that is adjacent to $e_k$, and we let $T^2_n(k+1)$ be the connected component of $T^2_n(k) \backslash f_k$ with the largest volume and $T^1_n(k+1)=\mathfrak{flip} \big( T_n(k) \backslash T^2_n(k+1), e_k \big)$.

\end{itemize}

We now set $\widetilde{P}_n(k)=|\partial T^1_n(k)|$ and $\widetilde{V}_n(k)= \left| V \left( T^1_n(k) \right) \right|- \left| V \left( T^1_n(0) \right) \right|+1$. Note that $\widetilde{V}_n(k)$ is nondecreasing in $k$.

For $k \geq 0$, we define a random variable $e_k^* \in E(T^1_n(k)) \cup \{ \star \}$, where $\star$ is an additional state corresponding to all the edges not in $E(T^1_n(k))$, as follows: if $e_k$ lies inside or on the boundary of $T^1_n(k)$ then $e^*_k=e_k$, and if not then $e_k^*=\star$. We also define $\mathcal{F}_k$ as the $\sigma$-algebra generated by the variables $\left( T^1_n(i) \right)_{0 \leq i \leq k}$ and $(e_i^*)_{0 \leq i \leq k-1}$.

\begin{lem} \label{resteruniforme}
For all $k$, conditionally on $\mathcal{F}_k$, the triangulation $T^2_n(k)$ is a uniform triangulation with a boundary of length $\widetilde{P}_n(k)$ and $\lceil \frac{n+1}{2} \rceil -\widetilde{V}_n(k)$ inner vertices.
\end{lem}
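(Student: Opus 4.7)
The plan is to prove the lemma by induction on $k$. The base case $k=0$ holds by construction: $T^2_n(0)$ is, by definition, uniform on $\mathscr{T}_{\lceil(n-1)/2\rceil,1}$ and independent of $T^1_n(0)$, and one checks that $\widetilde P_n(0)=1$ and $\lceil(n+1)/2\rceil-\widetilde V_n(0)=\lceil(n-1)/2\rceil$, matching the statement. For the inductive step, the extra information contained in $\mathcal{F}_{k+1}$ relative to $\mathcal{F}_k$ is the value of $e_k^*$ together with $T^1_n(k+1)$ (and, when $e_k^*\in\partial T^1_n(k)$, this forces the face $f_k$ of $T^2_n(k)$ adjacent to $e_k$ to be revealed).

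The key preliminary observation is combinatorial: by Euler's formula $T_n(k)$ has exactly $3(n-2)$ edges, and a triangulation in $\mathscr{T}_{N,P}$ has exactly $3N+P-3$ interior edges. Writing $P_k:=\widetilde P_n(k)$ and $I_k:=\lceil(n+1)/2\rceil-\widetilde V_n(k)$, this implies that the conditional probabilities $\P(e_k^*=\star\mid\mathcal{F}_k,T^2_n(k))$ and $\P(e_k^*=e\mid\mathcal{F}_k,T^2_n(k))$ (for any fixed $e\in E(T^1_n(k))$) depend only on $\mathcal{F}_k$ and not on the random realization of $T^2_n(k)$. Consequently, conditioning on any specific outcome of $e_k^*$ preserves the conditional uniformity of $T^2_n(k)$ on $\mathscr{T}_{I_k,P_k}$.

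With this in hand I would conclude by a three-way split on $e_k^*$. If $e_k^*$ is an interior edge of $T^1_n(k)$, then $T^2_n(k+1)=T^2_n(k)$ and $(\widetilde P_n,\widetilde V_n)$ are unchanged, so there is nothing to prove. If $e_k^*=\star$, then $e_k$ is uniform over the interior edges of the uniform triangulation $T^2_n(k)$; since $\mathfrak{flip}$ is an involution on pairs $(T,e)$ with $T\in\mathscr{T}_{I_k,P_k}$ and $e$ an interior edge (the interior-edge count $3I_k+P_k-3$ being preserved), the pushforward $T^2_n(k+1)=\mathfrak{flip}(T^2_n(k),e_k)$ is again uniform on $\mathscr{T}_{I_k,P_k}$, and the parameters are unchanged. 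The main step, and the one I expect to be most delicate, is the boundary case $e_k^*\in\partial T^1_n(k)$, which is a filled-in peeling step applied to the uniform triangulation $T^2_n(k)$: by the spatial Markov property of uniform triangulations with boundary — a direct consequence of the enumeration formula \eqref{enumeration} and of the factorization of the uniform measure on the complement of a revealed sub-map — conditionally on $f_k$ the one or two connected components of $T^2_n(k)\setminus f_k$ are independent uniform on the appropriate sets $\mathscr{T}_{n_j,p_j}$. Retaining the largest-volume component therefore produces $T^2_n(k+1)$ uniform on $\mathscr{T}_{I_{k+1},P_{k+1}}$; the remaining task is the bookkeeping check that the updated parameters $(P_{k+1},I_{k+1})$ indeed coincide with $(\widetilde P_n(k+1),\lceil(n+1)/2\rceil-\widetilde V_n(k+1))$, which is short but somewhat tedious in the splitting sub-case, where one must track which boundary edges of $T^1_n(k+1)$ originate from $T^1_n(k)$, which from $f_k$, and which from the boundary of the absorbed smaller component.
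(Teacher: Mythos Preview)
Your proposal is correct and follows essentially the same induction-with-three-cases argument as the paper, with the same mechanisms in each case (trivial when $e_k^*$ is interior to $T^1_n(k)$, flip-invariance of the uniform measure when $e_k^*=\star$, and a standard peeling step when $e_k^*\in\partial T^1_n(k)$). The main difference is only one of explicitness: you spell out the Euler-formula edge count showing that $\P(e_k^*=\cdot\mid\mathcal{F}_k,T^2_n(k))$ does not depend on $T^2_n(k)$, and you flag the parameter bookkeeping in the splitting sub-case, both of which the paper leaves implicit.
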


\begin{proof}
We prove the lemma by induction on $k$. For $k=0$ it is obvious by the definition of $T^2_n(0)$. Let $k \geq 0$ be such that the lemma holds for $k$.
\begin{itemize}
\item[$\bullet$]
If $e_k^*$ lies inside $T^1_n(k)$, the result follows from the fact that $T^2_n(k)=T^2_n(k+1)$ and that conditionally on $\mathcal{F}_k$, the triangulation $T^2_n(k)$ is independent of $e^*_k$.
\item[$\bullet$]
If $e_k^*=\star$, it follows from the invariance of the uniform measure on $\mathscr{T}_{n,p}$ under flipping of a uniform edge among those which do not lie on the boundary.
%
%
\item[$\bullet$]
If $e_k^* \in \partial T^1_n(k)$, this is a standard peeling step: by invariance under rerooting of a uniform triangulation with fixed perimeter and volume, conditionally on $\mathcal{F}_k$ and $e_k$, the triangulation $T^2_n(k)$ rooted at $e_k$ is uniform. Hence, if the third vertex of the face $f_k$ of $T^2_n(k)$ adjacent to $e_k$ lies inside of $T^2_n(k)$, the remaining part of $T^2_n(k)$ is a uniform triangulation with a boundary of length $\widetilde{P}_n(k)+1$ and $\lceil \frac{n+1}{2} \rceil-\widetilde{V}_n(k)-1$ inner vertices. If the third vertex of $f_k$ lies on $\partial T^2_n(k)$, then the face $f_k$ separates $T^2_n(k)$ in two independent uniform triangulations with fixed perimeters and volumes, and the lemma follows.
\end{itemize}

%
\end{proof}

We now define the stopping times $\tau_j$ as the times at which the flipped edge lies on the boundary of the unknown part of the map, that is, the times $k$ at which we discover new parts of $T^2_n(k)$: we set $\tau_0=0$ and $\tau_{j+1}=\inf \{ k>\tau_j | e_k \in \partial T^1_n(k)\}$ for $j \geq 0$. We also write $P_n(j)=\widetilde{P}_n(\tau_j+1)$ and $V_n(j)=\widetilde{V}_n(\tau_j+1)$.

Then Lemma \ref{resteruniforme} shows that $\left( P_n, V_n \right)$ is a Markov chain with the same transitions as the perimeter and volume processes associated to the peeling process of a uniform triangulation with a boundary of length $1$ and $\lceil \frac{n-1}{2} \rceil$ inner vertices. Hence, Corollary \ref{estimatesPV} provides estimates for this process. Our next lemma will allow us to estimate the times $\tau_j$.

\begin{lem} \label{estimateTau}
Let $k_n=o(n^{5/4})$. Then for all $\eps>0$ we have
\[\mathbb{P} \left( \tau_{\eps n^{3/4}} > k_n \right) \xrightarrow[n \to +\infty]{} 1.\]
\end{lem}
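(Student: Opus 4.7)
\begin{idproof}
The plan is to exploit that at each step the edge $e_k$ is uniform among the $3n-6$ edges of $T_n(k)$, so that
\[ \P\big(e_k \in \partial T^1_n(k) \,\big|\, \mathcal{F}_k\big) = \frac{\widetilde{P}_n(k)}{3n-6}. \]
Since $\widetilde{P}_n$ stays constant equal to $P_n(j)$ on each interval $(\tau_j, \tau_{j+1}]$, and since Corollary \ref{estimatesPV} (applied via Lemma \ref{resteruniforme}) gives $\sup_{j \leq \eps n^{3/4}} P_n(j) = o(\sqrt{n})$ in probability, each step of the chain causes a peeling ``advance'' with probability only $o(1/\sqrt{n})$. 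The expected waiting time to accumulate $\eps n^{3/4}$ such advances is therefore of order $\omega(n^{5/4})$, which dominates $k_n$.

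Concretely, I would first pick $\delta_n \to 0$ slowly enough that
\[ \mathscr{E}_n := \big\{\, \sup_{j \leq \eps n^{3/4}} P_n(j) \leq \delta_n \sqrt{n} \,\big\} \]
satisfies $\P(\mathscr{E}_n) \to 1$, and introduce the stopping time $\rho := \inf\{k : \widetilde{P}_n(k) > \delta_n \sqrt{n}\}$, which on $\mathscr{E}_n$ satisfies $\rho > \tau_J$ for $J := \lfloor \eps n^{3/4} \rfloor$. Setting $\sigma := \rho \wedge k_n$ (a bounded stopping time) and $N_k := \#\{i < k : e_i \in \partial T^1_n(i)\}$, the process
\[ M_k = N_k - \sum_{i=0}^{k-1} \frac{\widetilde{P}_n(i)}{3n-6} \]
is a martingale. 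Optional stopping at $\sigma$ together with $\widetilde{P}_n(i) \leq \delta_n \sqrt{n}$ for $i < \sigma$ yields
\[ \E[N_\sigma] \leq \frac{k_n \cdot \delta_n \sqrt{n}}{3n-6} = o(n^{3/4}), \]
so Markov's inequality gives $\P(N_\sigma \geq J) = o(1)$. On $\mathscr{E}_n \cap \{\tau_J \leq k_n\}$ one has $\sigma \geq \tau_J$ and hence $N_\sigma \geq J$, which combined with $\P(\mathscr{E}_n^c) \to 0$ proves $\P(\tau_J > k_n) \to 1$.

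The only real subtlety is that the per-step advance probability $\widetilde{P}_n(k)/(3n-6)$ is itself random and its smallness holds only on a high-probability event rather than deterministically; the role of the stopping time $\rho$ is to convert the event-based bound $\mathscr{E}_n$ into a sample-path bound on $\widetilde{P}_n(i)$ for $i < \sigma$ that plays nicely with optional stopping. A minor check is that Corollary \ref{estimatesPV}, stated for the peeling of the sphere triangulation $T_n$, applies in our setup: by Lemma \ref{resteruniforme}, $(P_n, V_n)$ has the same law as the perimeter/volume process of the peeling of a uniform triangulation with boundary $1$ and $\lceil (n-1)/2 \rceil$ inner vertices, for which the same estimate holds (the factor $1/2$ in the volume being absorbed by the $o$-notation).
\end{idproof}
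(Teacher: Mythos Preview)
Your martingale/optional-stopping strategy is sound and is the natural ``dual'' of the paper's argument (the paper bounds $\tau_J$ from below by a sum of i.i.d.\ geometrics and applies the weak law of large numbers, whereas you bound the number $N_\sigma$ of boundary hits from above and apply Markov). However, there is a genuine gap in the input you feed into it.

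You invoke Corollary~\ref{estimatesPV} to obtain $\sup_{j\le \eps n^{3/4}} P_n(j)=o(\sqrt{n})$ in probability, but that corollary is stated for $j_n=o(n^{3/4})$, and $\eps n^{3/4}$ is not $o(n^{3/4})$. This is not a mere technicality: by the scaling-limit results of Curien--Le~Gall, $n^{-1/2}\sup_{j\le \eps n^{3/4}} P_n(j)$ converges in distribution to a \emph{positive} random variable, so your event $\mathscr{E}_n$ with $\delta_n\to 0$ does \emph{not} have probability tending to~$1$. Consequently the sentence ``pick $\delta_n\to 0$ slowly enough that $\P(\mathscr{E}_n)\to 1$'' cannot be made to work.

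The repair is minor. Take $\delta_n\equiv C$ a large constant; tightness (which is what the paper actually uses, citing \cite{CLGpeeling} directly rather than Corollary~\ref{estimatesPV}) gives $\P(\mathscr{E}_n)\ge 1-\eta$ for $C=C(\eta)$ large. Your optional-stopping bound then reads
\[
\E[N_\sigma]\;\le\;\frac{k_n\,C\sqrt{n}}{3n-6}\;=\;o(n^{3/4})
\]
since $k_n=o(n^{5/4})$, so Markov still gives $\P(N_\sigma\ge J)\to 0$, and hence $\limsup_n \P(\tau_J\le k_n)\le \eta$. Letting $\eta\downarrow 0$ concludes. In short: replace ``$o(\sqrt{n})$ in probability'' by ``$O(\sqrt{n})$ in probability (tightness)'', drop the requirement $\delta_n\to 0$, and your argument goes through.
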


\begin{proof}
Conditionally on $P_n$, the variables $\tau_{j+1}-\tau_j$ are independent geometric variables with respective parameters $\frac{P_n(j)}{n}$. Hence, $\tau_{\eps n^{3/4}}$ dominates the sum $S_n$ of $\eps n^{3/4}$ i.i.d. geometric variables with parameter $Q_n=\frac{1}{n} \max_{0 \leq j \leq \eps n^{3/4}} P_n(j)$. We have
\[\E [S_n|P_n]=\eps n^{3/4} Q_n=\eps n^{5/4} \times \frac{1}{\sqrt{n}} \max_{0 \leq j \leq \eps n^{3/4}} P_n(j).\]
By the results of \cite{CLGpeeling}, the factor $\frac{1}{\sqrt{n}} \max_{0 \leq j \leq \eps n^{3/4}} P_n(j)$ converges in distribution, so $\frac{\E [S_n|P_n]}{\eps n^{5/4}}$ converges in distribution so $\frac{ \mathbb{E} \left[ S_n | P_n \right]}{k_n} \to +\infty$ in probability. By the weak law of large numbers we get $\frac{S_n}{k_n} \to +\infty$ in probability so $\frac{\tau_{\eps n^{3/4}}}{k_n} \to +\infty$ in probability.
\end{proof}

By combining Corollary \ref{estimatesPV} and Lemma \ref{estimateTau} we get the following result.

\begin{lem} \label{estimateTilde}
Let $k_n=o(n^{5/4})$. Then we have the convergences
\[ \frac{1}{\sqrt{n}} \widetilde{P}_n(k_n) \xrightarrow[n \to +\infty]{} 0 \hspace{5mm} \mbox{and} \hspace{5mm} \frac{1}{n} \widetilde{V}_n(k_n) \xrightarrow[n \to +\infty]{} 0\]
in probability.
\end{lem}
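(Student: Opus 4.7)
The plan is to exploit the fact that $\widetilde{P}_n$ and $\widetilde{V}_n$ are step functions whose only jumps occur at the peeling times $\tau_j$, then to combine this with the two preceding results. First I would check, by going through the three cases in the construction of $T^1_n(k+1)$, that the pair $(\widetilde{P}_n, \widetilde{V}_n)$ stays constant on each interval $[\tau_j+1, \tau_{j+1}]$: a flip strictly inside $T^1_n(k)$ preserves both the boundary and the vertex set of $T^1_n(k)$, while a flip inside $T^2_n(k)$ leaves $T^1_n(k)$ entirely unchanged. Only the third case, which occurs precisely at the times $\tau_{j+1}$, can modify either quantity. Setting $J_n := \#\{j \geq 1 : \tau_j < k_n\}$, this gives the identifications $\widetilde{P}_n(k_n) = P_n(J_n)$ and $\widetilde{V}_n(k_n) = V_n(J_n)$.

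Second, I would apply Corollary \ref{estimatesPV} to the chain $(P_n, V_n)$. By Lemma \ref{resteruniforme} and the remark following it, $(P_n, V_n)$ has the same law as the perimeter/volume process of a peeling of a uniform triangulation with boundary $1$ and $\lceil (n-1)/2 \rceil$ inner vertices. Via the root transformation recalled before \eqref{enumerationSphere}, this is equivalent to the peeling of a uniform triangulation of the sphere with $\lceil (n+1)/2 \rceil$ vertices, so Corollary \ref{estimatesPV} applies (replacing $n$ by $n/2$ inside the $o(\sqrt{n})$ and $o(n)$ estimates does not affect the asymptotics). Hence for every $\eps>0$,
\[\frac{1}{\sqrt{n}}\sup_{0 \leq j \leq \eps n^{3/4}} P_n(j) \xrightarrow[n \to +\infty]{} 0 \quad \text{and} \quad \frac{1}{n}\sup_{0 \leq j \leq \eps n^{3/4}} V_n(j) \xrightarrow[n \to +\infty]{} 0\]
in probability.

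To conclude I would fix $\delta > 0$, pick a small $\eps > 0$, and combine the above with Lemma \ref{estimateTau}: the latter ensures $\mathbb{P}(J_n \leq \eps n^{3/4}) \geq \mathbb{P}(\tau_{\eps n^{3/4}} > k_n) \to 1$, and on this event $\widetilde{P}_n(k_n) \leq \sup_{j \leq \eps n^{3/4}} P_n(j)$ is less than $\delta \sqrt{n}$ with probability tending to one, and similarly $\widetilde{V}_n(k_n) < \delta n$. Letting $\delta \to 0$ gives both convergences. There is no genuine obstacle here: all the substantive probabilistic content sits in Corollary \ref{estimatesPV} and Lemma \ref{estimateTau}, and the present lemma is essentially the bookkeeping that converts the indexing by peeling steps $j$ into the indexing by flip times $k$.
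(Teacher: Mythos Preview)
Your overall structure is right and matches the paper's, but there is a genuine gap in how you invoke Corollary~\ref{estimatesPV}. That corollary requires $j_n = o(n^{3/4})$, and $\eps n^{3/4}$ for fixed $\eps>0$ does \emph{not} satisfy this. In fact the displayed convergence you assert,
\[
\frac{1}{\sqrt{n}}\sup_{0 \leq j \leq \eps n^{3/4}} P_n(j) \xrightarrow[n \to +\infty]{} 0,
\]
is false: by the scaling results of \cite{CLGpeeling} the perimeter at step $\eps n^{3/4}$ is of order $\eps^{2/3}\sqrt{n}$, so the left-hand side converges in distribution to a nondegenerate random variable depending on $\eps$, not to zero. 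Consequently the last paragraph of your argument does not go through as written.

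The fix is the one the paper uses: from Lemma~\ref{estimateTau} one knows that $\mathbb{P}(\tau_{\eps n^{3/4}} > k_n) \to 1$ for every fixed $\eps>0$, and a standard diagonalisation then yields a deterministic sequence $j_n = o(n^{3/4})$ with $\mathbb{P}(\tau_{j_n} > k_n) \to 1$. With this $j_n$ in hand, Corollary~\ref{estimatesPV} applies directly and gives $\frac{1}{\sqrt{n}}\sup_{j \leq j_n} P_n(j) \to 0$ and $\frac{1}{n}\sup_{j \leq j_n} V_n(j) \to 0$, from which the conclusion follows exactly as you indicate via $\widetilde{P}_n(k_n) = P_n(J_n)$ with $J_n \leq j_n$ on the good event. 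So the repair is minor, but as stated your second displayed claim is an incorrect application of the corollary.
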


\begin{proof}
By Lemma \ref{estimateTau} there is a deterministic sequence $j_n=o(n^{3/4})$ such that $\mathbb{P} \left( \tau_{j_n} > k_n \right) \to 1$. This means that with probability going to $1$ as $n \to +\infty$ there is $J \leq j_n$ such that $\tau_J < k_n \leq \tau_{J+1}$ so \[\widetilde{P}_n(k_n)=P_n(J) \leq \sup_{0 \leq j \leq j_n} P_n(j) \hspace{5mm} \mbox{and} \hspace{5mm} \widetilde{V}_n(k_n)=V_n(J) \leq \sup_{0 \leq j \leq j_n} V_n(j).\]
But we know from Corollary \ref{estimatesPV} that
\[ \left( \frac{1}{\sqrt{n}} \sup_{0 \leq j \leq j_n} P_n(j), \frac{1}{n} \sup_{0 \leq j \leq j_n} V_n(j) \right) \xrightarrow[n \to +\infty]{(P)} 0,\]
which proves Lemma \ref{estimateTilde}.
\end{proof}

So $T_n^2(k_n)$ has the distribution of $T_{n/2-\widetilde{V}_n(k_n),\widetilde{P}_n(k_n)}$ and there is $p_n=o(\sqrt{n})$ such that
\[ \P \left( \widetilde{P}_n(k_n)<p_n \mbox{ and } n/2-\widetilde{V}_n(k_n)>\frac{n}{3} \right) \xrightarrow[n \to +\infty]{} 1.\]
Let $r_n$ be such that $r_n=o(n^{1/4})$ and $p_n=o(r_n^2)$ (take for example $r_n=n^{1/8} p_n^{1/4}$). By Lemma \ref{smallcycle}, with probability going to $1$ as $n \to +\infty$, there is a cycle $\gamma$ in $T_n^2(k_n)$ of length at most $r_n$ such that the part of $T^2_n(k_n)$ lying between $\partial T^2_n(k_n)$ and $\gamma$ has volume at most $\frac{n}{6}$. Moreover we have $\widetilde{V}_n(k_n)=o(n)$ in probability by Lemma \ref{estimateTilde}, so the two parts of $T_n(k_n)$ separated by $\gamma$ both have volume at least $\frac{n}{4}$, which proves Proposition \ref{propcycle}.

The proof of our main theorem is now easy: let $\mathscr{T}_n^{\bowtie}$ be the set of the triangulations $t$ of the sphere with $n$ vertices in which there is a cycle of length at most $\ell_n$ that separates $t$ in two parts of volume at least $\frac{n}{4}$. Let also $k_n=o(n^{5/4})$. By Proposition \ref{propcycle} we have
\[ \mathbb{P} \left( T_n(k_n) \in \mathscr{T}_n^{\bowtie} \right) \xrightarrow[n \to +\infty]{} 1, \]
whereas by Corollary 1.2 of \cite{LGP08}, if $T_n(\infty)$ denotes a uniform variable on $\mathscr{T}_n$ we have
\[ \mathbb{P} \left( T_n(\infty) \in \mathscr{T}_n^{\bowtie} \right) \xrightarrow[n \to +\infty]{} 0. \]
Hence, the total variation distance between the distributions of $T_n(k_n)$ and $T_n(\infty)$ goes to $1$ as $n \to +\infty$ so the mixing time is greater than $k_n$ for $n$ large enough. Since this is true for any $k_n=o(n^{5/4})$, the mixing time must be at least $c n^{5/4}$ with $c>0$.

We end this paper by a few remarks about our lower bound and an open question.

\begin{rem}
We proved a lower bound on the mixing time in the worst case, but our proof still holds for the mixing time from a typical starting point. We just need to fix $\eps>0$ small, take as initial condition a uniform triangulation $T_n(0)$ conditioned on $\left|\partial B^{\bullet}_{n^{1/4}}(T_n(0)) \right| \leq \eps \sqrt{n}$ and $\frac{n}{3} \leq \left| B^{\bullet}_{n^{1/4}}(T_n(0)) \right| \leq \frac{2n}{3}$ and let $T_n^1(0)=B^{\bullet}_{n^{1/4}}(T_n(0))$. The event on which we condition has probability bounded away from $0$ (by the results of \cite{CLGpeeling} and coupling arguments) and after time $o(n^{5/4})$ there is still a seperating cycle of length $O(\eps^{1/2} n^{1/4})$.
\end{rem}

\begin{rem}
Here is a back-of-the-enveloppe computation that leads us to believe the lower bound we give is sharp if we start from a typical triangulation. The lengths of the geodesics in a uniform triangulation of volume $n$ are of order $n^{1/4}$, so if we fix two vertices $x$ and $y$ the probability that a flip hits the geodesic from $x$ to $y$ is roughly $n^{-3/4}$. Hence, if we do $n^{5/4}$ flips, about $n^{1/2}$ of them will affect the distance between $x$ and $y$. If we believe that this distance evolves roughly like a random walk, it will vary of about $\sqrt{n^{1/2}}=n^{1/4}$, which shows we are at the right scale. Of course, there are many reasons why this computation seems hard to be made rigourous, but it does not seem to be contradicted by numerical simulations.
\end{rem}

Finally, note that even in the simpler case of triangulations of a polygon, the lower bound $n^{3/2}$ is believed to be sharp but the best known upper bound \cite{MT97} is only $n^{5+o(1)}$. In our case we were not even able to prove the following.

\begin{conj}
The mixing time of $(T_n(k))_{k \geq 0}$ is polynomial in $n$.
\end{conj}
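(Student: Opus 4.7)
The natural route is the canonical paths / multicommodity flow method of Jerrum and Sinclair. The plan is to define, for every ordered pair $(s,t) \in \mathscr{T}_n \times \mathscr{T}_n$, a canonical sequence of flips $\gamma_{s,t} = (s = u_0, u_1, \ldots, u_L = t)$ in the flip graph, show that $L = O(n)$ (possible because the flip diameter is linear by Komuro's theorem, extended to the type-I setting in Lemma \ref{irreducibility}), and bound the congestion
\[ \rho := \max_{u \to v} \frac{1}{\pi(u)\,P(u,v)} \sum_{\substack{(s,t):\\ u \to v \in \gamma_{s,t}}} \pi(s)\pi(t)\,|\gamma_{s,t}|. \]
A polynomial bound $\rho \leq n^{O(1)}$ would yield, via Sinclair's inequality, an inverse polynomial spectral gap and hence polynomial mixing.

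A natural choice of canonical paths is to fix a target triangulation $t^{\star}_n \in \mathscr{T}_n$ (for instance a stacked triangulation with one high-degree hub vertex) and let $\gamma_{s,t}$ be the concatenation of a canonical straightening $s \to t^{\star}_n$ with the reverse of $t \to t^{\star}_n$. Komuro's linear-diameter argument constructs such a straightening vertex by vertex, giving paths of length $O(n)$. The issue on the sphere, as opposed to the disk case of \cite{MT97}, is that this straightening produces and destroys many loops and multi-edges along the way, so its intermediate states admit no obvious compact description. I would therefore prefer a \emph{peeling-driven} straightening: peel $s$ and $t$ simultaneously along a common deterministic algorithm, and after each peeled face perform local flips forcing the newly discovered region to agree with the corresponding region of $t^{\star}_n$. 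An intermediate state $u$ is then described by the already-processed prefix, the current frontier, and the two unpeeled interiors, giving a natural witness $W$ for the encoding argument that reconstructs $(s,t)$ from $(u,v,W)$.

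The hard part, and the reason the conjecture is open, is to bound $\rho$ in the \emph{worst} case. The estimates in the bulk of the paper give a typical frontier of size $O(\sqrt{n})$ and typical peeling time $O(n^{3/4})$, which would be more than enough; but canonical paths require control at all pairs $(s,t)$, including extremal triangulations whose peeling frontiers may in principle be much longer. Closing this gap seems to require either a worst-case structural bound on the perimeter along a canonical peeling of an arbitrary triangulation---which the enumeration formulas of Krikun do not directly provide---or a replacement of canonical paths by a technique (Sinclair's average-flow method, a coupling exploiting the peeling decomposition used in Section 2, or comparison with an auxiliary chain whose analysis is tractable) able to exploit typical rather than worst-case behaviour. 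Even in the far more rigid polygon setting only the very loose bound $n^{5+o(1)}$ is known \cite{MT97}, so any polynomial bound obtained along this route is likely to be very far from the conjectural $n^{5/4}$.
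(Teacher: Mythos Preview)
The statement you are addressing is labelled a \emph{conjecture} in the paper, and the paper offers no proof; the authors say explicitly that they ``were not even able to prove'' it. So there is no proof in the paper to compare your proposal against.

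Your own text is not a proof either, and you are candid about this: you outline the canonical-paths/multicommodity-flow framework, propose a peeling-driven straightening to build the paths, and then correctly identify the obstruction, namely that bounding the congestion $\rho$ polynomially requires worst-case control over the peeling frontier of an \emph{arbitrary} triangulation, which the enumeration and peeling estimates in the paper (all of which are in-probability statements about \emph{uniform} triangulations) do not give. Everything you write is reasonable as a discussion of strategy, and your closing remark that even the polygon case only achieves $n^{5+o(1)}$ matches the paper's own assessment of the difficulty. But no step of the argument is actually carried out: no canonical path is rigorously defined, no encoding $W$ is specified, and no bound on $\rho$ is established. As it stands, this is a research plan rather than a proof, and the conjecture remains open exactly as the paper leaves it.
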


\appendix

\section{Connectedness of the flip graph for type-I triangulations}

In this appendix, we show that the Markov chain we study is indeed irreducible.

\begin{lem}\label{irreducibility}
Let $\mathscr{G}_n$ be the graph whose vertex set is $\mathscr{T}_n$ and where two triangulations are related if one can pass from one to the other by a flip. Then $\mathscr{G}_n$ is connected and its diameter is linear in $n$.
\end{lem}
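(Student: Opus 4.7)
The strategy is to reduce to the type-III setting and invoke Komuro's theorem \cite{K97}, which asserts that the flip graph of unrooted type-III triangulations of the sphere with $n$ vertices is connected with diameter linear in $n$. The key intermediate claim I would prove is that any rooted type-I triangulation $t \in \mathscr{T}_n$ can be brought to a type-III triangulation via $O(n)$ flips.

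I would carry out this reduction in two phases. In the loop-elimination phase, pick a loop $\ell$ at a vertex $v$ and let $a, b$ be the third vertices of the two triangles adjacent to $\ell$. If $a \neq b$, simply flipping $\ell$ replaces it by an $(a,b)$-edge (possibly a new multi-edge, but \emph{never} a new loop). If $a = b$, the disc on one side of $\ell$ is very small and a brief case check shows that a single auxiliary flip inside that disc restores the inequality $a \neq b$, after which $\ell$ can be flipped. Since each such operation strictly decreases the number of loops and uses $O(1)$ flips, the phase terminates in $O(n)$ flips with a loop-free triangulation. In the multi-edge elimination phase, apply an analogous strategy to each bigon $\{e,e'\}$, choosing auxiliary flips so as to never create a loop. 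With an appropriate bookkeeping (e.g.\ targeting innermost bigons and using a suitable weighted potential), one shows that this phase also terminates in $O(n)$ flips, producing a type-III triangulation.

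Given $t_1, t_2 \in \mathscr{T}_n$, after applying the above reduction to each we obtain two type-III triangulations, which are connected by an $O(n)$-flip path in the unrooted flip graph by Komuro's theorem. This path lifts to a path in $\mathscr{G}_n$ via the canonical edge bijection preserved by each flip, ending at the desired unrooted triangulation but possibly with an incorrect root edge. To realign the root, I would argue that any two rootings of the same type-III triangulation are joined by an $O(n)$-flip path in $\mathscr{G}_n$: one transports the root along a simple path of edges of the target triangulation, at each step using a short local flip sequence whose net effect is to shift the root to a neighbouring edge while restoring the surrounding combinatorics (again using Komuro locally).

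The main obstacle is the case analysis in the reduction step, especially the multi-edge phase: one must exhibit a flip rule that strictly decreases an appropriate potential without ever creating new loops. The delicate cases are those in which the small disc bounded by the loop, or the bigon bounded by the multi-edge, consists of only one or two faces, so that the naive flip is either non-flippable or produces the very feature one is trying to remove. Once these local configurations are tabulated, the rest of the argument follows routinely from Komuro's theorem and the lifting to the rooted flip graph.
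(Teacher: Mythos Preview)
Your approach is essentially the paper's: eliminate loops, then multi-edges, then invoke Komuro's linear-diameter result for type-III triangulations. Two points of comparison are worth noting. First, the paper's loop phase is cleaner than yours: instead of splitting on whether the two third vertices $a,b$ coincide and inserting an auxiliary flip, the paper always flips a \emph{minimal} loop, i.e.\ one bounding a disc that contains no other loop; since $a$ and $b$ lie on opposite sides of the loop and the loop-free side cannot have its third vertex equal to the loop vertex, one automatically has $a\neq b$, so a single flip removes the loop and no case analysis is needed. Second, for multi-edges the paper simply asserts that in a loop-free triangulation, flipping one edge of a parallel pair creates neither a loop nor an additional multi-edge, so one flip per pair suffices---your ``weighted potential'' and ``bookkeeping'' are unnecessary. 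On the other hand, you explicitly address the rooting issue (lifting Komuro's unrooted result to $\mathscr{G}_n$), which the paper passes over in silence.
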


\begin{proof}
It is proved in \cite{W36} that the flip graph for type-III triangulations is connected, and in \cite{K97} that its diameter is linear in $n$. Hence, it is enough to show that any triangulation is connected to a type-III triangulation in $\mathscr{G}_n$ by a linear number of edges. If $t$ is a finite triangulation with loops, it contains a \textit{minimal} loop, that is, a loop dividing the sphere in two parts, one of which contains no loop. By flipping a minimal loop we delete a loop without to create any new one, so we make the number of loops decrease and we can delete all loops in a linear number of flips. Moreover, if $t$ contains no loop and there are two edges $e_1,e_2$ between the same pair of vertices, then flipping $e_1$ does not create any loop or additional multiple edges, so we can also delete all multiple edges in a linear number of flips.
\end{proof}


\bibliographystyle{abbrv}
\bibliography{bibli}

\end{document}